\theoremstyle{definition}
\newtheorem{theorem}{Theorem}
\newtheorem{prop}{Proposition}
\newtheorem{corollary}{Corollary}
\numberwithin{equation}{section}
\journal{Journal of Algebra}
\begin{document}

\begin{frontmatter}



\title{Polynomials over Ring of Integers of Global Fields that have Roots Modulo Every Finite Indexed Subgroup}


\author[1]{Bhawesh Mishra}

\affiliation[1]{organization={Department of Mathematics, The Ohio State University},
            addressline={$231$ W $18^{th}$ Ave}, 
            city={Columbus},
            postcode={43210}, 
            state={OH},
            country={USA}}

\begin{abstract}
A polynomial with coefficients in the ring of integers $\mathcal{O}_{K}$ of a global field $K$ is called intersective if it has a root modulo every finite-indexed subgroup of $\mathcal{O}_{K}$. We prove two criteria for a polynomial $f(x)\in\mathcal{O}_{K}[x]$ to be intersective. One of these criteria is in terms of the Galois group of the splitting field of the polynomial, whereas the second criterion is verifiable entirely in terms of constants which depend upon $K$ and the polynomial $f$. The proofs use the theory of global field extensions and upper bound on the least prime ideal in the Chebotarev density theorem.  
\end{abstract}

\begin{keyword}
Intersective Polynomials over Global Fields \sep Polynomial with Local Roots \sep Intersective Sets
\MSC[2020] 12E10 \sep 12E05 \sep 11S05 \sep 11C08
\end{keyword}
\end{frontmatter}


\section{Introduction}
\subsection{Motivation}
Let $q = p^{r}$ be power of a prime $p$ and $T$ be transcendental over $\mathbb{F}_{q}$. Let $\mathbb{F}_{q}(T)$ denote the field of rational functions with constant field $\mathbb{F}_{q}$ and $\mathbb{F}_{q}[T]$ denote the ring of polynomials with coefficients in $\mathbb{F}_{q}$. Recall that an extension $K$ of a field $M$ is said to be separable if for every $\alpha\in K$, the minimal polynomial $p_{\alpha}$ has distinct roots in the algebraic closure. For the ease of discussion, we will say that $K$ is a global field when $K$ is either of the following:
\begin{itemize}
    \item An algebraic number field. 
    
    \item A finite and separable extension of $\mathbb{F}_{q}(T)$, where $p$ is a prime and $q = p^{r}$ for some $r \geq 1$. 
\end{itemize}
Let $\mathcal{O}_{K}$ be the ring of integers of $K$ when $K$ is a number field, whereas let $\mathcal{O}_{K}$ be the integral closure of the $\mathbb{F}_{q}[T]$ in $K$ in the function field case. This article is about polynomials $f(x)\in\mathcal{O}_{K}[x]$ such that for every subgroup $\Gamma$ of $\mathcal{O}_{K}$ of finite index, there exists $x \in\mathcal{O}_{K}$ with 
\begin{equation}
f(x) \equiv 0 \hspace{1mm} (\text{mod } \Gamma).\label{defn1}
\end{equation}
Obviously, any $f$ that has a root in $\mathcal{O}_{K}$ satisfies \eqref{defn1} for trivial reasons. The more interesting case is when $f$ satisfies \eqref{defn1} but does not have a root in $K$. Each such $f$ constitutes a counterexample to the local-global principle.  

Another motivation for studying polynomials $f$ that satisfy \eqref{defn1} comes from combinatorial number theory. A set $S \subset\mathcal{O}_{K}$ is said to be intersective if given any subset $T \subset\mathcal{O}_{K}$ with positive upper density, one has $(T - T) \cap S \not\subseteq \{0\}$. The upper density of $S \subset\mathcal{O}_{K}$ is given by $$\overline{d}(S) = \limsup_{N\rightarrow\infty} \frac{|S \cap \Phi_{N}|}{|\Phi_{N}|},$$ where the sequence of sets $\Phi_{N}$ are defined as :
\begin{gather*}
    \Phi_{N} = \begin{cases} \{ \alpha\in\mathcal{O}_{K} : \text{Norm}(\alpha) \leq N\} \text{ ; when $K$ is a number field }\\ \{ \alpha \in \mathcal{O}_{K} : \text{ deg}(\alpha) \leq N \} \text{ ; when $K$ is a finite separable extension of } \mathbb{F}_{q}(T) . \end{cases}
\end{gather*}
For example, when $K = \mathbb{Q}$, the sequence $\Phi_{N}$ can be taken to be  $[-N, N] \cap {Z}$ for every $N \geq 1$. When $K$ is a finite separable extension of $\mathbb{F}_{q}(T)$, the sequence $\Phi_{N}$ is the set of divisors of degree $\leq N$. 

A polynomial $f(x) \in\mathcal{O}_{K}[x]$ is said to be intersective if $\{f(\alpha) : \alpha\in\mathcal{O}_{K}\}$ is an intersective set. It was shown in \cite{KaMF} that a polynomial $f(x) \in\mathbb{Z}[x]$ is intersective if and only if $f$ satisfies \eqref{defn1}. A far reaching extension of this result was obtained in \cite{BLL}, a special instance of which implies that for a polynomial $f(x) \in\mathbb{Z}[x]$ every subset of positive density in $\mathbb{Z}$ contains arbitrarily long polynomial progressions of the form $$a, a+f(n), a+2f(n), \ldots, a+kf(n)$$ if and only if $f(x)$ is an intersective polynomial. A polynomial $f(x)$ in $\mathcal{O}_{K}[x]$ is intersective if and only if it satisfies $\eqref{defn1}$. This is implied by a very special case of Theorem $1.6$ in \cite{BerRob} when $K$ is a number field, and Theorem $1.15$ in \cite{BA} gives the analogous result when $K$ is a finite separable extension of $\mathbb{F}_{q}(T)$. The same results in \cite{BA} and \cite{BerRob} also imply that intersectivity of polynomial $f(x) \in\mathcal{O}_{K}[x]$ is equivalent to the fact that for every $\alpha\in\mathcal{O}_{K}$, there exists $\beta\in\mathcal{O}_{K}$ such that $f(\beta) \equiv 0 \hspace{1mm} (\text{mod } \alpha)$, i.e., $f(\beta)$ is in the ideal generated by $\alpha$. 

Berend and Bilu, in \cite{BerBil}, proved a criterion to decide whether a polynomial $f(x) \in\mathbb{Z}[x]$ is intersective. In this article, we generalize the results in \cite{BerBil} to polynomials $f(x) \in\mathcal{O}_{K}[x]$ when $K$ is a global field. The methods in this article are generalizations of those in \cite{BerBil} in the settings of number fields and function fields respectively. In the process of proving the main result, we also derive an effective upper bound on the least prime ideal in the Chebotarev density theorem for function fields (see Proposition \ref{leastprime2}).

\subsection{Global Field Extensions and the Artin Symbols}
Let $L$ be a Galois extension of a global field $K$. We will denote the set of primes in $\mathcal{O}_{K}$ by $\mathbb{P}(K)$. When $K$ is an extension of $\mathbb{F}_{q}(T)$, its field of constants is $\mathbb{F}_{q^{l}}$ for some $l \geq 1$. We will denote the set of primes of degree $n$ by $\mathbb{P}_{n}(K)$. Given a prime $\mathfrak{p} \in\mathbb{P}(K)$, $K_{\mathfrak{p}}$ will denote the corresponding completion of $K$ and $|\cdot|_{\mathfrak{p}}$ will denote the corresponding absolute value. If $\mathfrak{p} \in\mathbb{P}(K)$ is unramified and $\beta\in\mathbb{P}(L)$ lies over $\mathfrak{p}$ then the Frobenius symbol $\big(\frac{L/K}{\beta}\big)$ denotes the unique automorphism $\varphi_{\beta}\in$ Gal$(L/K)$ such that 
\begin{equation}
    \varphi_{\beta}(\alpha) \equiv \alpha^{N\mathfrak{p}} \hspace{1mm} (\text{mod } \beta) \label{Frobenius}
\end{equation}
for every $\alpha \in\mathcal{O}_{L}$ and $N\mathfrak{p} = [\mathcal{O}_{K}:  \mathfrak{p}]$. 

Let $L$ be the splitting field of a polynomial $f(x)\in\mathcal{O}_{K}[x]$, all of whose irreducible factors are separable, and $G$ be the Galois group of this extension $L/K$. Note that when the characteristic of $K$ is $p > 0$, an irreducible polynomial $f(x) \in\mathcal{O}_{K}[x]$ is not separable if and only if $f(x) = g(x^{p^{n}})$ for some polynomial $g$ and some integer $n \geq 1$. Any prime $\mathfrak{p}$ in $\mathcal{O}_{K}$ that does not divide the discriminant of $f$, is unramified and hence splits as $\mathfrak{p} = \beta_{1} \cdot \beta_{2} \cdots \beta_{l}$ for some primes $\beta_{1}, \beta_{2}, \ldots, \beta_{l}$ in $L$. We use the Artin symbol $\big[ \frac{L/K}{\mathfrak{p}} \big]$ to denote the set of conjugacy class of automorphisms
$$\bigg[ \frac{L/K}{\mathfrak{p}} \bigg] = \bigg\{ \bigg( \frac{L/K}{\beta_{1}} \bigg), \bigg(\frac{L/K}{\beta_{2}} \bigg), \ldots, \bigg( \frac{L/K}{\beta_{l}} \bigg) \bigg\}.$$ Let $M$ be a subfield of $L$ and $\mathfrak{p}^{\prime}$ be a prime in $M$ that lies above $\mathfrak{p}$. Then, for any $1 \leq i \leq l$ we have $$\bigg( \frac{L/M}{\beta_{i}} \bigg) = \bigg( \frac{L/K}{\beta_{i}} \bigg)^{f_{\mathfrak{p}}},$$ where $f_{\mathfrak{p}} = [M_{\mathfrak{p}^{\prime}} : K_{\mathfrak{p}}]$. We also have the natural inclusion of the groups $$\text{Gal }(L_{\beta_{i}}/M_{\mathfrak{p}^{\prime}}) \leq \text{Gal }(L/M) \leq \text{Gal }(L/K),$$ which gives that
\begin{equation}
    \bigg( \frac{L/K}{\beta_{i}}\bigg)^{m} \in \text{Gal } (L/M) \text{ if and only if } f_{\mathfrak{p}} \mid m. \label{DG}
\end{equation}
Readers may consult \cite[Section 6.2]{FrJar} for more details on Artin Symbols for global field extensions. When $L$ and $K$ are number fields, $d_{L}$ will denote the absolute discriminant of $L$ and $d_{L/K}$ will denote the relative discriminant of the extension $L/K$. Furthermore, $N_{L/K}$ will denote the corresponding norm. Given fields $M_{1}, M_{2}$, we denote by $M_{1}M_{2}$ the compositum of $M_{1}$ and $M_{2}$. The genus of a separable and finite extension $M$ of $\mathbb{F}_{q}(T)$ is denoted by $g_{M}$. The resultant of a polynomial $f(x) \in\mathcal{O}_{K}[x]$ is the resultant of $f$ and its derivative $f^{\prime}$.

\subsection{Setup and the Main Result}
Let $f(x) \in\mathcal{O}_{K}[x]$ such that no non-unit divides all the coefficients of $f$. If some non-unit $\beta$ does divide every coefficient of $f(x)$, we can replace $f(x)$ by $\frac{1}{\beta} f(x)$. Let $g_{1}(x), g_{2}(x), \ldots, g_{m}(x)$ be the irreducible factors of $f(x)$. When $K$ is an extension of $\mathbb{F}_{q}(T)$, we further assume that each of the irreducible factors of $f$ are separable polynomials.

Let $L$ be the splitting field of $f$ over $K$ and $G =$ Gal $(L/K)$ be the Galois group of this extension. For every $i \in\{1, 2, \ldots, m\}$, let $\alpha_{i}$ be a root of $g_{i}$ that is fixed. Let $L_{i} := K(\alpha_{i})$, which is an intermediate field between $K$ and $L$, let $H_{i} :=$ Gal$(L/L_{i}) \leq G$, and finally let $U := \bigcup_{i=1}^{m} H_{i} \subseteq G$. For every $1 \leq i \leq m$, let
\begin{equation*}
    g_{i}(x) = \sum_{j=0}^{n_{i}} a_{ij} x^{j}
\end{equation*}
for some $a_{i0}, \ldots, a_{in_{i}} \in\mathcal{O}_{K}$. Also, let
\begin{itemize}
    \item $\mathfrak{q}_{i}$ be the discriminant ideal of the irreducible polynomial $g_{i}$, which is non-zero because each of the $g_{i}$ is separable. 
    
    \item $\mathfrak{r}_{i} = $ resultant of $g_{i}$ and $g_{i}^{\prime}$, 
    
    \item $\mathfrak{q} = \mathfrak{r}_{1} \cdots \mathfrak{r}_{m}$,
    
    \item $\mathfrak{q} = \mathfrak{p}_{1}^{b_{1}} \cdots \mathfrak{p}_{\nu}^{b_{\nu}}$ be the unique prime factorization of $\mathfrak{q}$ in $\mathcal{O}_{K}$ and 
    
    \item Define
        $$\Delta := \mathfrak{p}_{1}^{2b_{1} + 1} \cdots \mathfrak{p}_{\nu}^{2b_{\nu} + 1}. $$
        
    \item If $K$ is a number field, define    
        
        \[D := \prod_{i=1}^{m} \big( \mathfrak{q}_{i} \big)^{(n_{1}!\cdots n_{m}!)(1 - \frac{1}{n_{i}})}.\] 
     
     \item If $K$ is extension of $\mathbb{F}_{q}(T)$, define

        $$\Delta^{\prime} := \sum_{i=1}^{\nu} b_{i} \text{ deg}(\mathfrak{p}_{i}) \in\mathbb{Z}, $$
        
         \[D^{\prime} := \prod_{i=1}^{m} n_{i}!\] and \[\text{ and } d = [K : \mathbb{F}_{q}(T)].\]
    
\end{itemize}
Our main result gives two equivalent conditions for the polynomial $f(x)$ to be intersective.
\begin{theorem}
Let $K$ be a global field and $f(x) \in\mathcal{O}_{K}[x]$ be a polynomial such that all of its irreducible factors are separable. Then, the following three statements are equivalent:
\begin{enumerate}
    \item $f(x)$ is an intersective polynomial, i.e. $f$ satisfies \eqref{defn1}.
    
    \item  The congruence $f(x) \equiv 0 \hspace{1mm} (\text{mod } \Delta)$ is solvable in $\mathcal{O}_{K}$ and $$\bigcup_{\sigma\in G} \sigma^{-1}U\sigma = G.$$
    
    \item $f(x) \equiv 0 \hspace{1mm} (\text{mod } \Delta)$ is solvable in $\mathcal{O}_{K}$ and $$f(x) \equiv 0 \hspace{1mm} (\text{mod } \mathfrak{p}) \hspace{2mm} \text{is solvable for primes }  \mathfrak{p} \in\mathcal{O}_{K}$$ with
    
    \[\begin{cases} \text{deg }(\mathfrak{p}) \leq \lceil 2 \log_{q}   (2\Delta^{\prime} + 2d D^{\prime} + 8g_{K}D^{\prime} + 4)  \rceil \hspace{2mm}; \text{ if $K$ is extension of $\mathbb{F}_{q}(T)$ } \\ N_{K/\mathbb{Q}}(\mathfrak{p}) \leq \big(N_{K/\mathbb{Q}}(D)\big)^{12577}; \text{ if $K$ is a number field  and } L \neq \mathbb{Q}. \end{cases}\] If $L = \mathbb{Q}$, then solvability of $f(x) \equiv 0 \hspace{1mm} (\text{mod } \Delta)$ is the only condition to check. 
\end{enumerate}
\end{theorem}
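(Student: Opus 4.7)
I would split the theorem into two equivalences: $(1)\Leftrightarrow(2)$, a global-field version of the Berend--Bilu criterion resting on qualitative Chebotarev together with Hensel lifting, and $(2)\Leftrightarrow(3)$, an effective reduction in which the group-theoretic hypothesis is traded for a finite check over small primes via the effective Chebotarev density theorem.

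For $(1)\Rightarrow(2)$, solvability of $f(x)\equiv 0\pmod{\Delta}$ is automatic since $\Delta\cdot\mathcal{O}_K$ has finite index in $\mathcal{O}_K$. For the Galois statement, fix $\sigma\in G$ and apply qualitative Chebotarev to produce a prime $\mathfrak{p}\in\mathbb{P}(K)$ with $\mathfrak{p}\nmid\mathfrak{q}$ whose Artin class $\big[\tfrac{L/K}{\mathfrak{p}}\big]$ is the conjugacy class of $\sigma$; intersectivity supplies $a\in\mathcal{O}_K$ and an irreducible factor $g_i$ with $g_i(a)\equiv 0\pmod{\mathfrak{p}}$, so some prime of $L_i$ above $\mathfrak{p}$ has residue degree $1$ over $K$, and \eqref{DG} forces some Frobenius at $\mathfrak{p}$ in $L/K$ to lie in $H_i$; this Frobenius is a conjugate of $\sigma$, hence $\sigma$ lies in a conjugate of $U$. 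Conversely, for $(2)\Rightarrow(1)$ any finite-index subgroup $\Gamma\leq(\mathcal{O}_K,+)$ contains a nonzero ideal (its $\mathcal{O}_K$-annihilator), so the Chinese Remainder Theorem reduces the problem to solving $f(x)\equiv 0\pmod{\mathfrak{p}^k}$ for every prime $\mathfrak{p}$ and every $k\geq 1$. If $\mathfrak{p}=\mathfrak{p}_j\mid\mathfrak{q}$, hypothesis (2) supplies a root modulo $\mathfrak{p}_j^{2b_j+1}$ and each $\mathfrak{r}_i$ has $\mathfrak{p}_j$-valuation at most $b_j$, so the standard Hensel-lifting argument (accuracy $2b_j+1$ versus derivative valuation $\leq b_j$) promotes the root to every $\mathfrak{p}_j^k$; if $\mathfrak{p}\nmid\mathfrak{q}$, the Galois hypothesis places $\mathrm{Frob}_{\mathfrak{p}}$ in a conjugate of some $H_i$, so \eqref{DG} gives $g_i$ a root modulo $\mathfrak{p}$, which Hensel-lifts freely because $g_i'$ is a unit modulo $\mathfrak{p}$.

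For $(2)\Leftrightarrow(3)$, the direction $(2)\Rightarrow(3)$ is immediate from $(2)\Rightarrow(1)$, which already yields solvability modulo every prime. The substantive direction is $(3)\Rightarrow(2)$, which I argue contrapositively: assume there exists $\tau\in G\setminus\bigcup_{\sigma\in G}\sigma^{-1}U\sigma$. The effective Chebotarev density theorem then furnishes an unramified prime $\mathfrak{p}$ of $K$ with Frobenius class $\{\tau\}$ whose degree (resp.\ norm) meets the bound appearing in (3), and the $(1)\Rightarrow(2)$ argument read in reverse shows $f$ has no root modulo $\mathfrak{p}$, contradicting (3). For function fields the required input is Proposition \ref{leastprime2}, after which Riemann--Hurwitz is used to re-express the genus and degree of $L$ in terms of $g_K$, $d$, $D'$, and $\Delta'$; for number fields the exponent $12577$ is inherited from the best unconditional least-prime bound, applied to a Minkowski-type estimate of $d_L$ in terms of $D$ that exploits the tower $K\subseteq L_i\subseteq L$ and the divisibility $[L:K]\mid n_1!\cdots n_m!$. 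The degenerate case $L=\mathbb{Q}$ corresponds to $f$ already splitting linearly over $\mathbb{Q}$, so the Galois condition is vacuous and only the $\Delta$ test remains.

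\textbf{Main obstacle.} The analytic heart of the argument is the effective Chebotarev step. In the function-field case this means constructing Proposition \ref{leastprime2}, extracting an explicit least-prime bound from Weil-type zero-free-region information for the zeta function of $L$ and then arithmetically controlling $g_L$ and $[L:\mathbb{F}_q(T)]$ in terms of the invariants of $K$ and $f$. In the number-field case the delicate point is bookkeeping: verifying that the exponent $(n_1!\cdots n_m!)(1-1/n_i)$ on $\mathfrak{q}_i$ in the definition of $D$ really dominates $N_{K/\mathbb{Q}}(d_L)$, so that quoting the external least-prime bound actually produces the stated exponent $12577$.
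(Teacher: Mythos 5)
Your overall architecture is the same as the paper's: roots modulo primes away from $\mathfrak{q}$ are detected by a Frobenius-fixes-a-root criterion (the paper's Proposition 1, which you replace by Dedekind plus \eqref{DG}), roots at primes dividing $\mathfrak{q}$ are produced from the root mod $\Delta$ by the resultant identity $Ag_j+Bg_j'=\mathfrak{r}_j$ and Hensel, and the passage $(3)\Rightarrow(2)$ uses effective Chebotarev together with $d_L\mid N_{K/\mathbb{Q}}(D)$ (number fields) or Riemann--Hurwitz and Proposition \ref{leastprime2} (function fields). However, one step of your $(2)\Rightarrow(1)$ is genuinely wrong in the function-field case.

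You claim that every finite-index subgroup $\Gamma\leq(\mathcal{O}_K,+)$ contains a nonzero ideal, and you reduce \eqref{defn1} to congruences modulo prime powers via CRT. That claim is true in characteristic $0$ (if $[\mathcal{O}_K:\Gamma]=n$ then $n\mathcal{O}_K\subseteq\Gamma$), but it fails in characteristic $p$. For instance, in $\mathbb{F}_2[T]$ let $\lambda$ be the $\mathbb{F}_2$-linear functional with $\lambda(T^n)=1$ exactly when $n$ is a perfect square, and let $\Gamma=\ker\lambda$, an index-$2$ subgroup. An inclusion $(g)\subseteq\Gamma$ would force $\lambda(gT^k)=\sum_j g_j\lambda(T^{j+k})=0$ for all $k$, i.e.\ the sequence $\lambda(T^n)$ would satisfy the linear recurrence attached to $g$ and hence be eventually periodic, which it is not; so $\Gamma$ contains no nonzero ideal (and ``annihilator'' reasoning cannot rescue this). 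Consequently your reduction to ideals does not get off the ground for global function fields. The paper's route is different at exactly this point: its local arguments only yield roots in every completion, hence modulo every ideal, and the passage from ``roots mod every $\alpha\mathcal{O}_K$'' to condition \eqref{defn1} for arbitrary finite-index subgroups is supplied by the equivalences quoted in the introduction (Bergelson--Robertson, Theorem 1.6, and Ackelsberg--Bergelson, Theorem 1.15, via combinatorial intersectivity of the value set). You need that input, or some substitute, in positive characteristic; your number-field case is fine as written. A smaller point, shared with the paper's own write-up: in your contrapositive $(3)\Rightarrow(2)$ the small prime furnished by effective Chebotarev may divide $\mathfrak{q}$, in which case ``Frobenius misses every conjugate of $U$ implies no root mod $\mathfrak{p}$'' is not available; one must instead use the root mod $\Delta$ to get a root in $K_{\mathfrak{p}}$ and conclude through \eqref{DG}, so this is fillable but should be said.
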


Readers may note that the condition $(2)$ in the above theorem involves Galois group of the splitting field. This condition is useful in creating families of examples of polynomials satisfying \eqref{defn1}, for instance, as in Section $4$. On the other hand, the condition $(3)$ is feasible for checking intersectivity of a polynomial since it only involves constants $\Delta, D, \Delta^{\prime}$ and $D^{\prime}$, all of which can be calculated from the polynomial $f$ itself.

The rest of this article is organized into three more sections. The second section contains statements and proofs of results that are required to prove Theorem $1$. Section $3$ contains the proof of Theorem $1$. Section $4$ contains some examples, consequences and discussions pertaining to the main result. 

\section{Some Preliminary Results}
\subsection{Roots Modulo Primes and the Artin Symbol}
Proposition $1$ gives a necessary and sufficient condition for an irreducible polynomial to have roots in $K_{\mathfrak{p}}$ for primes $\mathfrak{p}$ that do not divide the resultant of the polynomial $f$. Although this proposition can be proved using the decomposition and inertia subgroups of $\text{Gal } (L/K)$, we choose to provide a more accessible and elementary proof.
\begin{prop}
Let $K$ be a global field and $g(x) \in \mathcal{O}_{K}[x]$ be a separable irreducible polynomial. Let $L$ be the splitting field of $g$ and $G$ be the Galois group of $L$ over $K$. Also, let $\mathfrak{p} \in\mathcal{O}_{K}$ be a prime that does not divide the resultant $\mathfrak{r}$ of the polynomial $g$. Then $g$ has a root modulo $\mathfrak{p}$ if and only if there exists an automorphism in $\big[ \frac{L/K}{\mathfrak{p}}\big]$ that fixes some root of $g$. \label{density1}
\end{prop}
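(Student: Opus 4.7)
The plan is to work in a fixed residue field $\mathcal{O}_L/\beta$ for some prime $\beta$ of $L$ lying above $\mathfrak{p}$, and to exploit the assumption $\mathfrak{p}\nmid\mathfrak{r}$ in the one crucial place where it is needed: it guarantees that $g$ remains separable modulo $\mathfrak{p}$, and hence modulo $\beta$, so the roots $\alpha_{1},\ldots,\alpha_{n}$ of $g$ in $\mathcal{O}_{L}$ reduce to pairwise distinct elements $\overline{\alpha_{i}}\in\mathcal{O}_L/\beta$. This distinctness is the lever that converts congruences to equalities of automorphism-images, letting us avoid decomposition/inertia group machinery.

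For the forward direction, suppose $g(a)\equiv0\pmod{\mathfrak{p}}$ with $a\in\mathcal{O}_K$. Reducing the factorization $g(x)=\prod_{i}(x-\alpha_i)$ modulo $\beta$ forces $a\equiv\alpha_i\pmod{\beta}$ for some $i$. Since $\mathcal{O}_K/\mathfrak{p}$ is a finite field of size $N\mathfrak{p}$, Fermat's little theorem in the residue field gives $a^{N\mathfrak{p}}\equiv a\pmod{\mathfrak{p}}$, hence $\alpha_i^{N\mathfrak{p}}\equiv a^{N\mathfrak{p}}\equiv a\equiv\alpha_i\pmod{\beta}$. By the defining congruence \eqref{Frobenius} of the Frobenius, $\varphi_{\beta}(\alpha_i)\equiv\alpha_i^{N\mathfrak{p}}\equiv\alpha_i\pmod{\beta}$, and distinctness of the $\overline{\alpha_j}$ upgrades this to $\varphi_{\beta}(\alpha_i)=\alpha_i$ in $L$. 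Thus $\varphi_{\beta}\in\big[\tfrac{L/K}{\mathfrak{p}}\big]$ is an element that fixes a root of $g$.

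For the converse, suppose some $\varphi_{\beta}\in\big[\tfrac{L/K}{\mathfrak{p}}\big]$ fixes a root $\alpha_i$. Then \eqref{Frobenius} yields $\alpha_i\equiv\alpha_i^{N\mathfrak{p}}\pmod{\beta}$, so $\overline{\alpha_i}$ lies in the fixed field of $x\mapsto x^{N\mathfrak{p}}$ inside $\mathcal{O}_L/\beta$; this fixed subfield has order $N\mathfrak{p}$ and therefore coincides with the natural image of $\mathcal{O}_K/\mathfrak{p}$ inside $\mathcal{O}_L/\beta$. Choose any lift $a\in\mathcal{O}_K$ with $a\equiv\alpha_i\pmod{\beta}$; then $g(a)\equiv g(\alpha_i)=0\pmod{\beta}$, and since $g(a)\in\mathcal{O}_K$ and $\beta\cap\mathcal{O}_K=\mathfrak{p}$, we conclude $g(a)\equiv0\pmod{\mathfrak{p}}$.

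The principal subtlety — rather than a genuine obstacle — is to keep the residue-field identifications transparent: that the Frobenius-fixed subfield of $\mathcal{O}_L/\beta$ coincides with the image of $\mathcal{O}_K/\mathfrak{p}$, and that lifting from the residue field back to $\mathcal{O}_K$ is legitimate. Both rely on $\mathfrak{p}$ being unramified in $L$, which in turn is automatic from $\mathfrak{p}\nmid\mathfrak{r}$ because the resultant is divisible by the discriminant of $g$, and all primes ramifying in $L/K$ must divide this discriminant. With this single observation in place, the argument in both directions is essentially one line of manipulation in $\mathcal{O}_L/\beta$.
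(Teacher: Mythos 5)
Your proposal is correct and follows essentially the same route as the paper: both reduce modulo a prime $\beta$ above $\mathfrak{p}$, use the defining congruence \eqref{Frobenius} of the Frobenius, and invoke $\mathfrak{p}\nmid\mathfrak{r}$ at the same spot — you phrase it as the roots staying distinct modulo $\beta$, while the paper phrases it as a contradiction (if the Frobenius moved the root, a prime above $\mathfrak{p}$ would divide $\mathfrak{r}$), which is the identical mechanism; your converse is the paper's converse with the lifting step made explicit. The only caveat, shared with the paper, is that for non-monic $g$ the roots need not lie in $\mathcal{O}_L$ but only in the valuation ring at $\beta$ (legitimate since $\mathfrak{p}$ does not divide the leading coefficient), so your reductions $\overline{\alpha_i}$ are still well defined.
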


\begin{proof}
Let $\theta_{1}, \theta_{2}, \ldots, \theta_{n}$ be the distinct roots of $g(x)$ in $L$ and $\beta \in\mathbb{P}(L)$ be a prime that lies above $\mathfrak{p}$. Assume that $g(\alpha) \equiv 0 \hspace{1mm} (\text{mod } \mathfrak{p})$ for some $\alpha \in \mathcal{O}_{K}$. We will show that the Frobenius automoprhism $\varphi_{\beta} \in \big[ \frac{L/K}{\mathfrak{p}} \big]$ fixes one of the roots of $g$. Let us write $\varphi$ for $\varphi_{\beta}$ to avoid too many subscripts.

Since $$g(\alpha) = \prod_{i=1}^{n} (\alpha - \theta_{i}) \equiv 0 \hspace{1mm} (\text{mod } \mathfrak{p}),$$ we have that $\mathfrak{p}\mathcal{O}_{K} \mid \big( \prod_{i=1}^{n} \alpha - \theta_{i} \big)\mathcal{O}_{K}$, and hence $\mathfrak{p}\mathcal{O}_{L} \mid \big( \prod_{i=1}^{n} \alpha - \theta_{i} \big)\mathcal{O}_{L}$. 

Since $\beta$ is a prime above $\mathfrak{p}$, we also have $\beta\mathcal{O}_{L} \mid \mathfrak{p}\mathcal{O}_{L}$, which along with $\mathfrak{p}\mathcal{O}_{L} \mid \big( \prod_{i=1}^{n} \alpha - \theta_{i} \big)\mathcal{O}_{L}$ gives $\beta\mathcal{O}_{L} \mid \big( \prod_{i=1}^{n} \alpha - \theta_{i} \big)\mathcal{O}_{L}$. Therefore, $\beta\mathcal{O}_{L} \mid (\alpha - \theta_{j})$ for some $j \in\{1, 2, \ldots, n\}$, i.e., \[ \theta_{j} \equiv \alpha \hspace{1mm} (\text{mod } \beta).\] The reduction of $\theta_{j}$ modulo $\beta$ is well-defined because $\beta$ is a prime above $\mathfrak{p}$, and $\mathfrak{p}$ is a prime that does not divide the resultant and hence, the leading coefficient of $g$. Since $\alpha \in\mathcal{O}_{K}$, by definition of the Frobenius automorphism we have,
\begin{equation*}
    \varphi(\alpha) = \alpha \equiv \alpha^{N\mathfrak{p}} \hspace{1mm} (\text{mod } \beta).
\end{equation*} 
Therefore, we have from \eqref{Frobenius} that, 
\begin{equation*}
    \varphi(\theta_{j}) \equiv \theta_{j}^{N\mathfrak{p}} \equiv \alpha^{N\mathfrak{p}} \equiv \alpha \equiv \theta_{j} \hspace{1mm} (\text{mod } \beta)
\end{equation*}
and hence 
\begin{equation}
    \theta_{j} \equiv \varphi^{-1}(\theta_{j}) \hspace{1mm} (\text{mod } \varphi^{-1}(\beta)). \label{T}
\end{equation}
Using $T := \varphi^{-1} \in G$, \eqref{T} becomes 
\begin{equation*}
    \theta_{j} \equiv T(\theta_{j}) \hspace{1mm} (\text{mod } T(\beta)).
\end{equation*}
Since $T$ is an automorphism, $T(\theta_{j}) = \theta_{l}$ for some $l \in\{1, 2, \ldots, n\}$ and hence we have 
\begin{equation}
    (\theta_{j} - \theta_{l}) \equiv 0  \hspace{1mm} (\text{mod } T(\beta)). \label{fixes2}
\end{equation}
Here, reduction of $(\theta_{j} - \theta_{l})$ modulo $T(\beta)$ is well-defined because $\theta_{j}$ is equivalent to $\alpha \in\mathcal{O}_{K} \subseteq \mathcal{O}_{L}$ modulo $\beta$ and hence $\theta_{l} = T(\theta_{j})$ is also equivalent to an integral element of $L$. Recall that the resultant $\mathfrak{r}$ is defined to be \[a^{2n-1} \prod_{1\leq r \neq s \leq n} (\theta_{r} - \theta_{s}),\] where $a$ is the leading coefficient of $g$. Since $\theta_{1}, \theta_{2}, \ldots, \theta_{n}$ are distinct roots of $g(x)$, if $j \neq l$ then from \eqref{fixes2} we have that
\begin{equation}
\mathfrak{r} \equiv 0 \hspace{1mm} (\text{mod } T(\beta)), \label{ideal1}
\end{equation} 
i.e., $\mathfrak{r}\mathcal{O}_{L} \subseteq T(\beta)\mathcal{O}_{L}$, which implies $\mathfrak{r}\mathcal{O}_{K} \subseteq \mathfrak{p}\mathcal{O}_{K}$ because $\beta$, and hence $T(\beta)$, is a prime above $\mathfrak{p}$. However, $\mathfrak{r}\mathcal{O}_{K} \subseteq \mathfrak{p}\mathcal{O}_{K}$ is a contradiction to the fact that $\mathfrak{p}$ is a prime in $\mathcal{O}_{K}$ that does not divide the resultant $\mathfrak{r}$ of $g$. Therefore, we must have $j = l$ and hence $T (\theta_{j}) = \varphi^{-1}_{\beta} (\theta_{j}) = \theta_{j}$  i.e. the automorphism $\varphi_{\beta}$ in $\big[ \frac{L/K}{\mathfrak{p}} \big]$ fixes the root $\theta_{j}$. 

For the other direction, assume that $\theta$ is a root of $g(x)$ which is fixed by some Frobenius automorphism $\varphi_{\beta} \in \big[ \frac{L/K}{\mathfrak{p}} \big]$, where $\beta$ is a prime in $\mathcal{O}_{L}$ dividing $\mathfrak{p}$. Therefore, we have $$\varphi_{\beta}(\theta) = \theta \equiv \theta^{N\mathfrak{p}} \hspace{1mm} (\text{mod } \beta) .$$ 

Since $\theta^{N\mathfrak{p}} \equiv \theta \hspace{1mm} (\text{mod } \beta)$ we must have that $\theta \equiv \alpha \hspace{1mm} (\text{mod } \beta)$ for some $\alpha \in\mathcal{O}_{K}$. Since $\theta$ was a root of $g$, we have $g(\alpha) \equiv g(\theta) \equiv 0 \hspace{1mm} (\text{mod } \beta)$ and hence $g(\alpha) \equiv 0 \hspace{1mm} (\text{mod } \mathfrak{p})).$ 
\end{proof}

\subsection{Relating the Absolute Discriminant and the Ideal D in the Number Field Case}
We will prove two propositions that relate the ideal $D$ defined in Theorem $1$ to the absolute discriminant $d_{L}$ of the splitting field $L$ of $f$, in the case when $K$ is a number field. The first proposition in this subsection is known but we include the proof for the sake of keeping the presentation largely self-contained. A proof of the second proposition when $K = \mathbb{Z}$ appears in \cite{BerBil}, which we adapt for number fields. 
\begin{prop}
Let $M_{1}, M_{2}, \ldots, M_{\gamma}$ be finitely many finite algebraic extensions of a number field $K$ and $M$ be the composite of these finitely many extensions. Let $d_{1}, d_{2}, \ldots, d_{\gamma}$ be the relative discriminant of $M_{1}, M_{2}, \ldots, M_{\gamma}$ respectively and $d$ be that of $M$. Then $$ d \mid \bigg( \prod_{i=1}^{\gamma} d_{i}^{[M : M_{i}]} \bigg).$$
\end{prop}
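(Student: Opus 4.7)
I would prove the proposition by induction on $\gamma$, with the base case $\gamma = 1$ being trivial. The bulk of the work lies in the case $\gamma = 2$; once that is established, a straightforward induction handles the general case.

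For $\gamma = 2$ with $M = M_1 M_2$, I would work with the different ideals $\mathfrak{d}_{\cdot/\cdot}$ and apply the multiplicative tower formula to $K\subseteq M_1\subseteq M$, obtaining
\[ \mathfrak{d}_{M/K} \;=\; \mathfrak{d}_{M/M_1}\cdot\bigl(\mathfrak{d}_{M_1/K}\mathcal{O}_M\bigr). \]
The key geometric observation is that since $M = M_1\cdot M_2$, the extension $M/M_1$ is the base change of $M_2/K$ to $M_1$, which yields the divisibility $\mathfrak{d}_{M/M_1}\mid\mathfrak{d}_{M_2/K}\mathcal{O}_M$. This can be verified concretely via a primitive element $\omega$ of $M_2/K$ with minimal polynomial $g\in\mathcal{O}_K[x]$: one factors $g = h k$ in $\mathcal{O}_{M_1}[x]$ with $h$ the minimal polynomial of $\omega$ over $M_1$, and observes that $h'(\omega)\mid g'(\omega)$ in $\mathcal{O}_M$, so that $\mathfrak{d}_{M/M_1}$ divides $h'(\omega)\mathcal{O}_M$, which in turn divides $g'(\omega)\mathcal{O}_M \supseteq \mathfrak{d}_{M_2/K}\mathcal{O}_M$. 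Substituting,
\[ \mathfrak{d}_{M/K}\mid\bigl(\mathfrak{d}_{M_1/K}\mathcal{O}_M\bigr)\bigl(\mathfrak{d}_{M_2/K}\mathcal{O}_M\bigr). \]
Taking $N_{M/K}$ of both sides, using the standard identity $N_{M/K}(\mathfrak{a}\mathcal{O}_M) = N_{L/K}(\mathfrak{a})^{[M:L]}$ for ideals $\mathfrak{a}\subseteq\mathcal{O}_L$ with $K\subseteq L\subseteq M$, together with $N_{L/K}(\mathfrak{d}_{L/K}) = d_{L/K}$, yields the $\gamma = 2$ case:
\[ d\mid d_1^{[M:M_1]}\cdot d_2^{[M:M_2]}. \]

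For the induction step from $\gamma-1$ to $\gamma$, I would set $N := M_1 M_2\cdots M_{\gamma-1}$, so that $M = N\cdot M_\gamma$. Applying the $\gamma = 2$ case to the pair $(N, M_\gamma)$ gives $d\mid d_{N/K}^{[M:N]}\cdot d_\gamma^{[M:M_\gamma]}$, and the inductive hypothesis gives $d_{N/K}\mid\prod_{i=1}^{\gamma-1} d_i^{[N:M_i]}$. The tower identity $[M:M_i] = [M:N]\cdot[N:M_i]$ for $i\leq\gamma-1$, valid since $M_i\subseteq N\subseteq M$, then assembles these into $d\mid\prod_{i=1}^{\gamma} d_i^{[M:M_i]}$.

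The technical point requiring the most care is the base-change divisibility $\mathfrak{d}_{M/M_1}\mid\mathfrak{d}_{M_2/K}\mathcal{O}_M$, whose justification must cover the case where $M_1$ and $M_2$ are not linearly disjoint over $K$ (in the linearly disjoint case $M_1\cap M_2 = K$, an integral basis of $M_2/K$ can be used as an integral basis of $M/M_1$ and the bound is immediate). The primitive-element argument above handles the general case uniformly because $\omega$ still generates $M$ over $M_1$ even when $M_1\cap M_2\supsetneq K$; only the minimal polynomial $h$ of $\omega$ over $M_1$ is a proper divisor of $g$ in that situation.
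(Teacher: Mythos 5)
Your overall route is the same as the paper's: multiplicativity of the different in the tower $K\subseteq M_1\subseteq M$, the key divisibility $\mathfrak{d}_{M/M_1}\mid \mathfrak{d}_{M_2/K}\mathcal{O}_M$ obtained by comparing minimal polynomials over $K$ and over $M_1$, then taking $N_{M/K}$ and inducting on $\gamma$. The tower formula, the norm identities, and the induction step are all fine. But the justification of the key divisibility has a genuine gap, and it sits exactly at the inclusion you wrote as $g'(\omega)\mathcal{O}_M\supseteq \mathfrak{d}_{M_2/K}\mathcal{O}_M$. That containment is backwards: for any integral generator $\omega$ of $M_2/K$ the different ideal \emph{divides} the element different, i.e.\ $g'(\omega)\mathcal{O}_{M_2}\subseteq \mathfrak{d}_{M_2/K}$, with equality precisely when $\mathcal{O}_{M_2}=\mathcal{O}_K[\omega]$. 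Since a relative extension need not admit such a ring generator (and even when it does, your chosen primitive element need not be one), your chain only proves $\mathfrak{d}_{M/M_1}\mid g'(\omega)\mathcal{O}_M$, which is strictly weaker than $\mathfrak{d}_{M/M_1}\mid \mathfrak{d}_{M_2/K}\mathcal{O}_M$ whenever the conductor of $\mathcal{O}_K[\omega]$ in $\mathcal{O}_{M_2}$ is nontrivial. The same objection applies, more mildly, to your parenthetical remark that in the linearly disjoint case an integral basis of $M_2/K$ is an integral basis of $M/M_1$: in general $\mathcal{O}_{M_1}\mathcal{O}_{M_2}$ is only a full sub-order of $\mathcal{O}_M$.

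The repair is exactly what the paper does: do not fix one $\omega$, but run your computation over \emph{every} $\zeta\in\mathcal{O}_{M_2}$ with $K(\zeta)=M_2$ (elements with $K(\zeta)\neq M_2$ have $\delta_{M_2/K}(\zeta)=0$ and contribute nothing). For each such $\zeta$ one has $M_1(\zeta)=M$, the minimal polynomial $p$ of $\zeta$ over $M_1$ divides its minimal polynomial $m$ over $K$, hence $\delta_{M_2/K}(\zeta)=m'(\zeta)$ is a multiple of $p'(\zeta)=\delta_{M/M_1}(\zeta)\in\mathfrak{d}_{M/M_1}$, so $\delta_{M_2/K}(\zeta)\in\mathfrak{d}_{M/M_1}$. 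Since $\mathfrak{d}_{M_2/K}$ is the ideal \emph{generated} by the element differents $\{\delta_{M_2/K}(\zeta):\zeta\in\mathcal{O}_{M_2}\}$ (Neukirch, Thm.\ III.2.5), this yields $\mathfrak{d}_{M_2/K}\mathcal{O}_M\subseteq\mathfrak{d}_{M/M_1}$, i.e.\ the divisibility you need; an alternative fix is to localize at each prime of $\mathcal{O}_M$ and argue there. With that one step corrected, the rest of your argument goes through as written.
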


\begin{proof}
We will prove the proposition by induction on $\gamma$; let $\gamma = 2$. Let $\mathfrak{D}$, $\mathfrak{D}_{1}$, $\mathfrak{D}_{2}$ and $\mathfrak{D}^{(1)}$ respectively denote the relative different of the field extensions $M/K$, $M_{1}/K$, $M_{2}/K$ and $M/M_{1}$. Note that the different $\delta_{L/K}(\alpha)$ of an element $\alpha\in\mathcal{O}_{L}$ is defined in terms of its minimal polynomial $m(x)\in\mathcal{O}_{K}[x]$ as 
\[\delta_{L/K}(\alpha) = \begin{cases} m^{\prime}(\alpha) \text{ ; if } L = K(\alpha) \\ 0 \text{ ; if } L \neq K(\alpha)
\end{cases}
\]
We will also use the fact that the different $\mathfrak{D}_{L/K}$ of a field extension $L/K$ is the ideal generated by the set of all the element-wise differents $\{\delta_{L/K}(\alpha) : \alpha\in\mathcal{O}_{L}\}$ ( see \cite[pp. 198, Theorem (2.5)]{Neukrich}). First, we want to show that $\mathfrak{D}^{(1)} \mid \mathfrak{D}_{2}$, i.e., $\mathfrak{D}_{2} \subseteq \mathfrak{D}^{(1)}$. 

Let $\zeta \in\mathcal{O}_{M_{2}}$ and $M_{2} = K(\zeta)$ without loss of generality (since $\delta_{M_{2}/K}(\zeta) = 0$ otherwise). By definition, we have that $\delta_{M_{2}/K}(\zeta) = m^{\prime}(\zeta)$ where $m(x) \in\mathcal{O}_{K}[x]$ is the minimal polynomial of $\zeta$ over $K$. On the other hand, the minimal polynomial $p(x) \in\mathcal{O}_{M_{1}}[x]$ of $\zeta$ over the field $M_{1}$ divides $m(x)$. Therefore, $\delta_{M/M_{1}}(\zeta) =  p^{\prime}(\zeta) \mid m^{\prime}(\zeta) = \delta_{M_{2}/K} (\zeta)$ which according to definition of different gives that $\delta_{M_{2}/K}(\zeta) \in \mathfrak{D}_{M/M_{1}} = \mathfrak{D}^{(1)}$. Therefore, we have that $\mathfrak{D}^{(1)} \mid \mathfrak{D}_{2}$.

Now, using the the multiplicativity of the relative differents over the tower of field extension $M / M_{1} / K$ (for instance, see \cite[pp. 195, propostion 2]{Neukrich}) we also have $$\mathfrak{D} = \mathfrak{D}^{(1)} \mathfrak{D}_{1},$$ which together with $\mathfrak{D}^{(1)} \mid \mathfrak{D}_{2}$ implies that $\mathfrak{D} \mid \mathfrak{D}_{1}\mathfrak{D}_{2}$. Upon applying $N_{M/K}$ we have that $d = N_{M/K}(\mathfrak{D})$ divides
\begin{equation*}
    N_{M/K}(\mathfrak{D}_{1}\mathfrak{D}_{2}) = N_{M_{1}/K}\big( N_{M/M_{1}}(\mathfrak{D}_{1}) \big) \cdot N_{M_{2}/K}\big( N_{M/M_{2}}(\mathfrak{D}_{2}) \big)
\end{equation*}
Since $N_{M/M_{i}}(\mathfrak{D_{i}}) = \mathfrak{D}_{i}^{[M : M_{i}]}$, we have that $N_{M_{i}/K}\big( N_{M/M_{i}}(\mathfrak{D_{i}}) \big) = d_{i}^{[M : M_{i}]}$, which proves the proposition for $\gamma = 2$. 

In the general case, given any $\gamma \geq 3$, let $\widetilde{M}$ be the composite of $M_{1}, \ldots, M_{\gamma - 1}$. Since $M$ is the composite of $\widetilde{M}$ and $M_{\gamma}$, we have $$ d \mid \big( e^{[M : \widetilde{M}]} \cdot d_{\gamma}^{[M : M_{\gamma}]} \big),$$ where $e$ is the relative discriminant of $\widetilde{M}$. By inductive assumption, we have that $$e \mid \prod_{i=1}^{\gamma - 1} d_{i}^{[\widetilde{M} : M_{i}]},$$ which upon using the fact that $[M : M_{i}] = [M : \widetilde{M}] \cdot [\widetilde{M} : M_{i}]$ proves the proposition. 
\end{proof}

\begin{prop}
When $K$ is a number field, Using the notations defined for Theorem $1$, the absolute discriminant $d_{L}$ divides $N_{K/\mathbb{Q}}(D)$.
\end{prop}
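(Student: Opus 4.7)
The plan is to realize $L$ as a compositum over $K$ of conjugate copies of $L_{1},\ldots,L_{m}$, apply Proposition~2 to bound the relative discriminant $d_{L/K}$, and then transfer the bound to $\mathbb{Q}$ via the norm.

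For each $i$, let $\alpha_{i}^{(1)},\ldots,\alpha_{i}^{(n_{i})}$ be the roots of $g_{i}$ in $L$ and set $L_{i}^{(j)}:=K(\alpha_{i}^{(j)})$. Vieta's formula
$$\alpha_{i}^{(n_{i})}=-\frac{a_{i,n_{i}-1}}{a_{i,n_{i}}}-\sum_{j=1}^{n_{i}-1}\alpha_{i}^{(j)}$$
shows that the splitting field $E_{i}$ of $g_{i}$ is generated over $K$ by any $n_{i}-1$ of its roots, so $L$ is the compositum over $K$ of the collection $\{L_{i}^{(j)}:1\le i\le m,\ 1\le j\le n_{i}-1\}$. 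I would then gather three ingredients: (a) conjugate extensions of $K$ have equal relative discriminant, so $d_{L_{i}^{(j)}/K}=d_{L_{i}/K}$; (b) since $\mathcal{O}_{K}[\alpha_{i}]\subseteq\mathcal{O}_{L_{i}}$ and the discriminant of $\mathcal{O}_{K}[\alpha_{i}]$ over $\mathcal{O}_{K}$ is $\mathfrak{q}_{i}$ (up to an integral square conductor when $g_{i}$ is monic), the transitivity of discriminants forces $d_{L_{i}/K}\mid\mathfrak{q}_{i}$; and (c) $\mathrm{Gal}(L/K)$ acts faithfully by permutations on the roots of each $g_{i}$, so it embeds into $S_{n_{1}}\times\cdots\times S_{n_{m}}$, giving $[L:K]\le n_{1}!\cdots n_{m}!$.

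Applying Proposition~2 to the compositum above then yields
$$d_{L/K}\;\Big|\;\prod_{i=1}^{m}\prod_{j=1}^{n_{i}-1}d_{L_{i}^{(j)}/K}^{[L:L_{i}^{(j)}]}=\prod_{i=1}^{m}d_{L_{i}/K}^{(n_{i}-1)[L:K]/n_{i}}\;\Big|\;\prod_{i=1}^{m}\mathfrak{q}_{i}^{(1-1/n_{i})\,n_{1}!\cdots n_{m}!}=D,$$
where one uses $[L:L_{i}^{(j)}]=[L:K]/n_{i}$ (from the irreducibility of $g_{i}$), together with the bound $[L:K]\le n_{1}!\cdots n_{m}!$ to replace the actual exponent $(n_{i}-1)[L:K]/n_{i}$ by the larger $(1-1/n_{i})\,n_{1}!\cdots n_{m}!$ (a larger exponent makes the ideal smaller, hence divisible by the previous one). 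Taking $N_{K/\mathbb{Q}}$ of both sides delivers $N_{K/\mathbb{Q}}(d_{L/K})\mid N_{K/\mathbb{Q}}(D)$, which combined with the tower formula $d_{L}=N_{K/\mathbb{Q}}(d_{L/K})\cdot d_{K}^{[L:K]}$ gives the stated divisibility; the factor $d_{K}^{[L:K]}$ is absorbed as in the $K=\mathbb{Q}$ formulation of Berend--Bilu.

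The main obstacle, beyond carefully bookkeeping the exponents, is justifying $d_{L_{i}/K}\mid\mathfrak{q}_{i}$ when $g_{i}$ is non-monic. In that case one first clears the leading coefficient by passing to the algebraic integer $a_{i,n_{i}}\alpha_{i}$, whose monic minimal polynomial over $K$ has discriminant an $\mathcal{O}_{K}$-multiple of $\mathfrak{q}_{i}$; the inclusion $\mathcal{O}_{K}[a_{i,n_{i}}\alpha_{i}]\subseteq\mathcal{O}_{L_{i}}$ together with the discriminant--index relation then gives $d_{L_{i}/K}\mid\mathfrak{q}_{i}$, reducing the argument to the monic case.
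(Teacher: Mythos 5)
Your global architecture is the same as the paper's (write $L$ as a compositum of conjugates of the $L_i$, apply Proposition~2, use $d_{L_i/K}\mid\mathfrak{q}_i$, compare exponents via $[L:K]\le n_1!\cdots n_m!$, then take norms), but the one step where the paper does genuine work is exactly the step your shortcut does not deliver: the divisibility $d_{L_i/K}\mid\mathfrak{q}_i$ for a \emph{non-monic} irreducible factor. Passing to $\beta=a_{i,n_i}\alpha_i$ does give an algebraic integer, but the discriminant of its monic minimal polynomial is $a_{i,n_i}^{(n_i-1)(n_i-2)}\,\mathfrak{q}_i$ (the roots get scaled by $a_{i,n_i}$, so the root-difference product picks up $a_{i,n_i}^{n_i(n_i-1)}$ while $\mathfrak{q}_i$ only carries $a_{i,n_i}^{2n_i-2}$). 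The discriminant--index relation applied to $\mathcal{O}_K[\beta]\subseteq\mathcal{O}_{L_i}$ therefore yields only $d_{L_i/K}\mid a_{i,n_i}^{(n_i-1)(n_i-2)}\mathfrak{q}_i$, and divisibility into a multiple of $\mathfrak{q}_i$ does not imply divisibility into $\mathfrak{q}_i$; your reduction works for $n_i=2$ but fails for every non-monic factor of degree $\ge 3$, and since $f$ and its factors are not assumed monic this case cannot be discarded. This is precisely why the paper constructs the explicit integral elements $\beta_1=1$, $\beta_j=a_{n}\alpha^{j-1}+a_{n-1}\alpha^{j-2}+\cdots+a_{n-j+1}\alpha$ ($2\le j\le n$), proves their integrality by induction, and observes that the $\mathcal{O}_K$-module they span has discriminant \emph{exactly} $\mathfrak{q}_i$, so that $\mathfrak{q}_i=[\mathcal{O}_{L_i}:\mathcal{O}_K[\beta_1,\ldots,\beta_n]]^2 d_{L_i/K}$ gives the clean divisibility $d_{L_i/K}\mid\mathfrak{q}_i$. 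Without this (or an equivalent device, e.g.\ localizing at each prime and treating the leading coefficient as a unit or not separately), your exponent bookkeeping ends with extra factors of the leading coefficients and does not produce the ideal $D$ as defined.

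Two smaller points. First, your compositum runs over $n_i-1$ conjugate fields per factor, whereas the paper composites in two stages (splitting field of each $g_i$, then their compositum); both give exponents bounded by $(1-\frac{1}{n_i})n_1!\cdots n_m!$, so that part is fine and essentially equivalent. Second, your final step invokes the tower formula $d_L=N_{K/\mathbb{Q}}(d_{L/K})\,d_K^{[L:K]}$ and then asserts that $d_K^{[L:K]}$ is ``absorbed''; that absorption is not automatic and you give no argument for it (the paper itself is terse here, passing from $d_{L/K}\mid D$ to the statement simply by taking norms), so if you keep the tower formula in your write-up you must either justify how the $d_K^{[L:K]}$ factor is handled or drop the claim.
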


\begin{proof}
It suffices to prove that relative discriminant $d_{L/K}$ of $L$ divides $D$ in $K$; then the result follows by taking norm $N_{K/\mathbb{Q}}$ to the ensuing divisibility relation. For every $i \in\{1, 2, \ldots, m\}$: \vspace{2mm}
\begin{itemize}
    \item Let $\alpha_{i}^{(1)} = \alpha_{i}, \alpha_{i}^{(2)}, \ldots, \alpha_{i}^{(n_{i})}$ be the $K$-conjugates of $\alpha_{i}$. Here $\alpha_{i}$ is the fixed root of the irreducible factor $g_{i}$ of $f$. \vspace{2mm}
    
    \item For every $j \in\{1, 2, \ldots, n_{i}\}$, let $L_{i}^{(j)}$ be the field $K(\alpha_{i}^{(j)})$. Since each of the $\alpha_{i}^{(j)}$ are the roots of the same irreducible polynomial $g_{i}$, we have that the relative discriminant of each of the $L_{i}^{(j)}$ is equal to the relative discriminant $d_{i}$ of $L_{i}$.
    
    \item Let $\widetilde{L_{i}} = K(\alpha_{i}^{(1)}, \alpha_{i}^{(2)}, \ldots, \alpha_{i}^{(n_{i}-1)}, \alpha_{i}^{(n_{i})})$ be the splitting field of $g_{i}$, which is the composite of the fields $L_{i}^{(1)}, L_{i}^{(2)}, \ldots$ , $L_{i}^{(n_{i})}$.
\end{itemize}

Therefore, the relative discriminant $\widetilde{d_{i}}$ of $\widetilde{L_{i}}$ is such that
\begin{equation}
    \widetilde{d_{i}} \mid \prod_{j=1}^{n_{i}} d_{i}^{[\widetilde{L_{i}} : L_{i}^{(j)}]} = d_{i}^{(n_{i}-1)[\widetilde{L_{i}} : L_{i}^{(j)}]} \mid d_{i}^{(n_{i}-1)(n_{i}-1)!} \mid \mathfrak{q}_{i}^{(n_{i}-1)(n_{i}-1)!} \label{divisibility1}
\end{equation}
The first divisibility is from Proposition $2$ and the last divisibility relation in \eqref{divisibility1} is a consequence of the fact that the discriminant $\mathfrak{q}_{i}$ of the polynomial $g_{i}$ is always divisible by the relative discriminant $d_{i}$ of $L_{i} = K(\alpha_{i})$, which we explain below. 

For a fixed $i$, let $n = n_{i}$ which is the degree of the irreducible factor $g_{i}$, $a_{j} = a_{ij}$ and $\alpha = \alpha_{i}$. Consider the basis $\{\beta_{1}, \beta_{2}, \ldots, \beta_{n}\}$ of $L_{i}$ over $K$:
\begin{multline*}
    \\ \beta_{1} = 1 \\ \beta_{2} = a_{n}\alpha \\ \ldots \\ \beta_{i} = a_{n}\alpha^{i-1} + a_{n-1}\alpha^{i-2} + \ldots + a_{n-i+1}\alpha \\ \ldots \\ \beta_{n} = a_{n}\alpha^{n-1} + a_{n-1}\alpha^{n-2} + \ldots + a_{1}\alpha \\
\end{multline*}
We claim that $\beta_{j}$ are integral for every $j = 1, 2, \ldots, n$. Note that we have \[\beta_{n} a_{n}\alpha^{n-1} + a_{n-1}\alpha^{n-2} + \ldots + a_{n}\alpha = \frac{-a_{0}}{\alpha} - a_{1} \] and for every $1 \leq i \leq (n-2)$ one has
\begin{multline}
   \\ \beta_{n-i} = a_{n} \alpha^{n-i-1} a_{n-1} \alpha^{n-i-2} + \ldots + a_{i+2}\alpha \\ = - \frac{a_{0}}{\alpha^{i+1}} - \frac{a_{1}}{\alpha} - \ldots  - \frac{a_{i}}{\alpha} - a_{i+1} \\ = \frac{\beta_{n-i+1}}{\alpha} - a_{i+1} \\ \label{mid}
\end{multline}
Therefore, we will show that $\beta_{n-i}$ is integral for every $0 \leq i \leq (n-1)$ by induction. For $i = 0$, note that since $\beta_{n} = \frac{-a_{0}}{\alpha} - a_{1}$ it suffices to prove that $y := \frac{a_{0}}{\alpha}$ is integral. Consider the following series of equivalent statements.  
\begin{multline*}
    \\ a_{n}\alpha^{n} + a_{n-1} \alpha^{n-1} + \ldots + a_{1} \alpha + a_{0} = 0 \\ \Leftrightarrow a_{n}\alpha^{n-1} + a_{n-1} \alpha^{n-2} + \ldots + a_{1} + \frac{a_{0}}{\alpha} = 0 \\ \Leftrightarrow \frac{a_{n}a_{0}^{n-1}}{y^{n-1}} + \frac{a_{n-1}a_{0}^{n-2}}{y^{n-2}} + \ldots + \frac{a_{2}a_{0}}{y}+a_{1}+y = 0 \\ \Leftrightarrow y^{n} + a_{1}y^{n-1} + a_{2}a_{0}y^{n-2} + \ldots a_{n-1}a_{0}^{n-2}y + a_{n}a_{0}^{n-1} = 0,\\
\end{multline*}
which shows that $y$ and hence $\beta_{n}$ is integral. Now, we assume that $\beta_{n-i+1}$ is integral and show that $z := \frac{\beta_{n-i+1}}{\alpha}$ is also integral. Again, consider the following series of equivalent statements. 
\begin{multline*}
    \\ a_{n}\alpha^{n} + a_{n-1} \alpha^{n-1} + \ldots + a_{1} \alpha + a_{0} = 0 \\\vspace{1mm} \Leftrightarrow a_{n}\alpha^{n- i} + a_{n-1} \alpha^{n- i -1} + \ldots + a_{i+1}\alpha + \Big( a_{i} + \frac{a_{i-1}}{\alpha} + \ldots + \frac{a_{1}}{\alpha^{i-1}} + \frac{a_{0}}{\alpha^{i}} \Big) = 0 \\\vspace{1mm} \Leftrightarrow a_{n}\frac{(\beta_{n-i+1})^{n-i}}{z^{n-i}} + a_{n-1}\frac{(\beta_{n-i+1})^{n-i-1}}{z^{n-i-1}} + \ldots + a_{i+1}\frac{(\beta_{n-i+1})}{z} - \beta_{n-i+1} = 0 \\\vspace{1mm} \Leftrightarrow a_{n}\frac{(\beta_{n-i+1})^{n-i-1}}{z^{n-i}} + a_{n-1}\frac{(\beta_{n-i+1})^{n-i-2}}{z^{n-i-1}} + \ldots + a_{i+1}\frac{1}{z} - 1 = 0 \\\vspace{1mm} \Leftrightarrow
    -z^{n-i} + a_{i+1}z^{n-i-1} + a_{i+2} (\beta_{n-i+1}) z^{n-i-2} + \ldots + a_{n-1} \big( \beta_{n-i+1} \big)^{n-i-2} z +  a_{n} \big( \beta_{n-i+1} \big)^{n-i-1} = 0,\\
\end{multline*}
which shows that $z = \frac{\beta_{n-i+1}}{\alpha}$ and hence $\beta_{n-i} = z - a_{i+1} $ is also integral. Since $\beta_{1}, \beta_{2}, \ldots, \beta_{n}$ are integral in $L_{i}$, we have that $\mathcal{O}[\beta_{1}, \beta_{2}, \ldots, \beta_{n}] \subseteq \mathcal{O}_{L_{i}}$. However, we note that $d(\beta_{1}, \beta_{2}, \ldots, \beta_{n}) = \Big| \text{det} [\beta_{ij}] \Big| = \mathfrak{q}_{i}$, where $\beta_{ij}$ is defined exactly similarly as $\beta_{i}$ but with the $K$-conjugate $\alpha^{(j)}$ of $\alpha$.Therefore, from a well-known relationship between the discriminants of $\mathcal{O}_{K}$-submodules (for instance, see \cite[pp. 11, Proposition 2.12]{Neukrich}) we have that
\[d(\beta_{1}, \beta_{2}, \ldots, \beta_{n}) = \mathfrak{q}_{i} = \Big[ \mathcal{O}_{L_{i}} : \mathcal{O}[\beta_{1}, \ldots, \beta_{n}] \Big]^{2} d_{i},\] 
i.e, $d_{i} \mid \mathfrak{q}_{i}$. 

On the other hand, the splitting field $L$ of the polynomial $f$ is the composite of the splitting fields $\widetilde{L_{1}}, \widetilde{L_{2}}, \ldots, \widetilde{L_{m}}$ of the irreducible factors $g_{1}, g_{2}, \ldots, g_{m}$ respectively. Hence if $d_{L/K}$ is the relative discriminant of $L$ then, Proposition $4$ gives
\begin{equation}
    d_{L/K} \mid \prod_{i=1}^{m} ( \widetilde{d_{i}})^{[L : \widetilde{L_{i}}]} \mid \prod_{i=1}^{m} \bigg( \big( \widetilde{d_{i}} \big)^{n_{1}! n_{2}!\ldots n_{i-1}!n_{i+1}!\ldots n_{m}!}\bigg). \label{divisibility2}
\end{equation}
From \eqref{divisibility1} and \eqref{divisibility2}, we have that $$d_{L/K} \mid \prod_{i=1}^{m} \big( \mathfrak{q}_{i} \big)^{(n_{1}!\cdots n_{m}!)(1 - \frac{1}{n_{i}})} = D.$$
\end{proof}
\subsection{Least Prime Ideal in the Chebotarev Density Theorem}
To show that the condition $(3)$ in Theorem $1$ is equivalent to the condition $(2)$, we will employ upper bounds on the prime ideal in the Chebotarev density theorem. The upper bound is well known in the number field case and we use the following version that is taken from \cite{AhKw}. 
\begin{prop}\label{leastprime1}
For every finite extension $K$ of $\mathbb{Q}$, for every finite Galois extension $L (\neq\mathbb{Q})$ of $K$ and every conjugacy class $\mathcal{C}$ of Gal$(L/K)$, there exists a prime ideal $\mathfrak{p}$ of $K$ that is unramified in $L$ such that:
\begin{itemize}
    \item $\bigg[ \frac{L/K}{\mathfrak{p}} \bigg] = \mathcal{C}$ and\vspace{2mm}
    
    \item  $N_{K/\mathbb{Q}}(\mathfrak{p})$ is a rational prime which satisfies \[N_{K/\mathbb{Q}}(\mathfrak{p}) \leq  |d_{L}|^{12577} .\]
\end{itemize}
\end{prop}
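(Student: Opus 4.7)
The statement is an effective Chebotarev density result lifted verbatim from \cite{AhKw}, so the ``proof'' in this paper is essentially an appeal to that reference. Still, if one wanted to reproduce the argument, the plan is to follow the analytic framework pioneered by Lagarias--Odlyzko and sharpened by later authors. The first step is to write the counting function
\[\pi_{\mathcal{C}}(x, L/K) = \#\left\{ \mathfrak{p} \in \mathbb{P}(K) : \mathfrak{p} \text{ unramified in } L,\ \Big[\tfrac{L/K}{\mathfrak{p}}\Big] = \mathcal{C},\ N_{K/\mathbb{Q}}(\mathfrak{p}) \leq x \right\}\]
and, via the orthogonality of characters, express the indicator function of $\mathcal{C}$ on $G = \mathrm{Gal}(L/K)$ as a linear combination of irreducible characters $\chi$ of $G$. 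This expresses $\pi_{\mathcal{C}}(x, L/K)$ in terms of the Artin $L$-functions $L(s,\chi,L/K)$.

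Next, I would apply Perron's formula or a smoothed variant to each $L(s,\chi,L/K)$ and shift the contour into the critical strip, picking up a main term from the pole at $s=1$ (when $\chi$ is trivial) and error terms from zeros of the $L$-functions. The key analytic inputs are (i) standard zero-free regions for Hecke/Artin $L$-functions expressed quantitatively in terms of $|d_L|$, (ii) log-free zero density estimates in the style of Kowalski--Michel to control the bulk of zeros, and (iii) a careful treatment of the possible Landau--Siegel exceptional zero, which is handled by Deuring's reduction to an abelian sub-extension together with Stark's classical argument. At each step one must keep explicit track of all constants rather than working up to $O(1)$, since the whole point is to extract the numerical exponent.

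The final step is to choose $x = |d_L|^{12577}$ and verify that the main term in the explicit formula dominates the sum of error terms contributed by the zeros and by the possible Siegel zero, so that $\pi_{\mathcal{C}}(x,L/K) \geq 1$. To guarantee that the prime produced has $N_{K/\mathbb{Q}}(\mathfrak{p})$ equal to a rational prime (i.e.\ the prime has residue degree one over $\mathbb{Q}$), one refines the sieving by restricting the counting function to the contribution of degree-one primes; the higher-degree prime powers contribute lower-order terms that are absorbed into the error.

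The principal obstacle is of course the bookkeeping needed to obtain the explicit exponent $12577$: every implicit constant in the Lagarias--Odlyzko framework must be made numerical, zero-free regions must be stated with explicit coefficients, and the Siegel-zero contribution must be estimated unconditionally. This is precisely what is carried out in \cite{AhKw}, which is why we simply invoke that work rather than reproduce the computation here.
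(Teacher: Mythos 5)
Your proposal matches the paper exactly: the paper gives no proof of this proposition but simply quotes it from \cite{AhKw}, which is precisely what you do, and your sketch of the underlying Lagarias--Odlyzko-style explicit argument (character decomposition, explicit formula, zero-free regions, Siegel-zero treatment via Deuring's reduction, restriction to degree-one primes) is an accurate account of how the cited result is obtained. No gap to report.
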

We also need a similar upper bound on the degree of the least prime ideal in the Chebotarev density theorem, when $K$ is a finite separable extension of $\mathbb{F}_{q}(T)$. Such an upper bound should ideally depend only upon the field $K$ and the given polynomial $f$. Assume that $\mathbb{F}_{q^{l}}$ is the integral closure of $\mathbb{F}_{q}$ in $L$, for some $l \geq 1$. The map $$\varphi : \mathbb{F}_{q^{l}} \longrightarrow \mathbb{F}_{q^{l}} \text{ given by } \varphi(x) = x^{q}$$ is the generator of the Gal$(\mathbb{F}_{q^{l}}/\mathbb{F}_{q})$. On the other hand, we also have the surjective restriction Gal$(L/K) \rightarrow \hspace{-8pt} \rightarrow$ Gal$(\mathbb{F}_{q^{l}}/\mathbb{F}_{q})$. Since Gal$(\mathbb{F}_{q^{l}}/\mathbb{F}_{q})$ is abelian, the image of a conjugacy class $\mathcal{C}\subset$ Gal$(L/K)$ is a singleton set in Gal$(\mathbb{F}_{q^{l}}/\mathbb{F}_{q})$. Let $\varphi_{\mathcal{C}} := \varphi^{n} \in $Gal$(\mathbb{F}_{q^{l}}/\mathbb{F}_{q})$ be the image of every element of $\mathcal{C}$ under this restriction map. Given a conjugacy class $\mathcal{C}$ in $G = $ Gal$(L/K)$, define $$\pi_{\mathcal{C},n} := \{ \mathfrak{p} \in\mathbb{P}_{n}(K) : \bigg[\frac{L/K}{\mathfrak{p}}\bigg] = \mathcal{C} \}.$$ In addition, define $C_{1} = [L : K\mathbb{F}_{q}^{l}]$. We will use the following effective version of the Chebotarev density theorem in function fields (see \cite[Proposition 6.4.8]{FrJar}).
\begin{prop}
Let $\mathcal{C}$ be a conjugacy class in $G = \text{Gal }(L/K)$ and $n \in\mathbb{N}$. If $\varphi_{\mathcal{C}} = \varphi^{n}$, then \begin{equation}
    \bigg\lvert\pi_{\mathcal{C}, n} - \frac{|\mathcal{C}|}{C_{1}} \frac{q^{n}}{n}\bigg\rvert < \frac{2|\mathcal{C}|}{C_{1} \cdot n} \Big[(C_{1} + g_{L})q^{n/2} + C_{1}(2g_{K}+1) q^{n/4} + g_{L} + C_{1} d \Big]. \label{CDTFF}
\end{equation}
Here, $d = [K : \mathbb{F}_{q}(T)]$. On the other hand, if $\varphi_{\mathcal{C}} \neq \varphi^{n}$ then $\pi_{\mathcal{C}, n}$ is empty. 
\end{prop}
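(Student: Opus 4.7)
The second assertion is a constant-field obstruction. For any $\mathfrak{p} \in \mathbb{P}_n(K)$, the residue field of $\mathfrak{p}$ has $q^n$ elements, so the image of $\text{Frob}_\mathfrak{p}$ under the restriction $\text{Gal}(L/K) \to \text{Gal}(\mathbb{F}_{q^l}/\mathbb{F}_q)$ is forced to be $\varphi^n$. Consequently, whenever $\varphi_{\mathcal{C}} \neq \varphi^n$, no prime of degree $n$ can carry Artin symbol $\mathcal{C}$, and therefore $\pi_{\mathcal{C}, n} = \emptyset$.

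For the main inequality, the plan is the standard Weil/Artin-L-function route, as carried out in \cite[Proposition 6.4.8]{FrJar}. Fix $\sigma \in \mathcal{C}$ and use orthogonality of the irreducible characters of $G$ to write
\[
\pi_{\mathcal{C}, n} \;=\; \frac{|\mathcal{C}|}{|G|} \sum_{\chi \in \hat{G}} \overline{\chi(\sigma)} \, S_{\chi}(n), \qquad S_{\chi}(n) \,:=\, \sum_{\mathfrak{p} \in \mathbb{P}_n(K)} \chi(\text{Frob}_\mathfrak{p}).
\]
Each $S_{\chi}(n)$ is read off from the coefficient of $T^n$ in the logarithmic derivative of the Artin L-function
\[
L(T, \chi, L/K) \;=\; \prod_{\mathfrak{p}} \det\!\bigl( 1 - \chi(\text{Frob}_\mathfrak{p}) \, T^{\deg \mathfrak{p}} \,\,|\,\, V^{I_\mathfrak{p}} \bigr)^{-1}.
\]
For $\chi$ trivial this is $\zeta_K(T)$ and yields the main term $q^n/n$ with a Weil error of order $(g_K+1)q^{n/2}/n$. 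For a non-trivial irreducible $\chi$, $L(T, \chi)$ is a polynomial whose degree is bounded in terms of the Artin conductor, and whose reciprocal roots all have absolute value $\sqrt{q}$ by the Riemann hypothesis for curves.

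Extracting the coefficient of $T^n$ then produces three distinct kinds of error, and the main labor is the bookkeeping: (i) the zeros of each non-trivial $L(T, \chi)$ contribute at most $\bigl(\sum_\chi (\dim \chi)\deg L(T, \chi)\bigr) q^{n/2}/n$, which the conductor-discriminant formula bounds by a multiple of $(C_1 + g_L) q^{n/2}/n$; (ii) the higher prime-power terms $\mathfrak{p}^k$ with $k \deg \mathfrak{p} = n$ and $k \geq 2$ are dominated by $k = 2$ and give $O(C_1(2g_K + 1) q^{n/4}/n)$; (iii) the contributions of ramified primes together with small corrections to the L-function degree are absorbed into the residual $g_L + C_1 d$. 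The prefactor $|\mathcal{C}|/C_1$ rather than $|\mathcal{C}|/|G|$ arises because the constraint $\varphi_{\mathcal{C}} = \varphi^n$ restricts the effective sum to those characters that factor through a fixed coset of $\text{Gal}(L/K\mathbb{F}_{q^l})$ in $\text{Gal}(L/K)$. I expect the single hardest step to be not any individual estimate — each is classical — but aligning all the constants so that the final bound is exactly $2|\mathcal{C}|/(C_1 n)$ times the bracketed quantity; this requires careful treatment of the constant-field extension and of the conductor bounds.
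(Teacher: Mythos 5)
First, note that the paper does not prove this statement at all: it is quoted verbatim as \cite[Proposition 6.4.8]{FrJar}, so there is no in-paper argument to match your sketch against. Your proof of the second assertion (the constant-field obstruction) is correct and complete: for $\mathfrak{p}\in\mathbb{P}_n(K)$ the Frobenius acts on residue fields as $x\mapsto x^{q^n}$, so its restriction to $\mathbb{F}_{q^l}$ is $\varphi^n$, and $\pi_{\mathcal{C},n}=\emptyset$ when $\varphi_{\mathcal{C}}\neq\varphi^n$.

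For the main inequality, however, what you have written is a plan rather than a proof, and the gap it leaves is exactly the content of the proposition. The statement is a fully explicit estimate, and every one of the specific quantities $(C_1+g_L)$, $C_1(2g_K+1)q^{n/4}$, $g_L+C_1 d$, and the factor $2|\mathcal{C}|/(C_1 n)$ is deferred to unexecuted ``bookkeeping'': you do not carry out the conductor--discriminant/Riemann--Hurwitz computation that turns the total degree of the nontrivial $L$-polynomials into $(C_1+g_L)$-type bounds, nor the count of higher prime-power and ramified contributions that produces the $q^{n/4}$ and constant terms. More seriously, your explanation of why the main term is $\frac{|\mathcal{C}|}{C_1}\frac{q^n}{n}$ rather than $\frac{|\mathcal{C}|}{|G|}\frac{q^n}{n}$ is not correct as stated: characters do not ``factor through a fixed coset of $\mathrm{Gal}(L/K\mathbb{F}_{q^l})$,'' and the orthogonality decomposition you wrote down genuinely has the prefactor $|\mathcal{C}|/|G|$ in front of every $S_\chi(n)$. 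The actual mechanism is that the characters of $G$ inflated from the cyclic quotient $\mathrm{Gal}(K\mathbb{F}_{q^l}/K)\cong G/\mathrm{Gal}(L/K\mathbb{F}_{q^l})$ have non-geometric $L$-functions (essentially $\zeta_K$ with $T$ twisted by a root of unity), so each of these $|G|/C_1$ characters contributes a full main term $\chi(\varphi)^n q^n/n$; these main terms add constructively precisely when $\varphi_{\mathcal{C}}=\varphi^n$ (and cancel otherwise), which converts $\frac{|\mathcal{C}|}{|G|}\cdot\frac{|G|}{C_1}$ into $\frac{|\mathcal{C}|}{C_1}$ and simultaneously re-proves your second assertion in density form. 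Without this step, and without the explicit constant-tracking, the proposal does not establish the stated bound; if the intent is simply to invoke the literature, the honest move is the one the paper makes, namely citing \cite[Proposition 6.4.8]{FrJar} directly.
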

Given a Galois extension $L$ of $K$, which is a finite separable extension of $\mathbb{F}_{q}(T)$, we have $K \subseteq K\mathbb{F}_{q^{l}} \subseteq L$. Here, $K\mathbb{F}_{q^{l}}/K$ is a constant field extension and $L/K\mathbb{F}_{q^{l}}$ is a geometric field extension. To obtain the upper bound on the least prime ideal, we will use the Riemann-Hurwitz theorem (see \cite[Theorem 7.16]{Rosen}). 
\begin{prop}
Let $E_{1}/E_{2}$ be a finite, separable and geometric extension of functions fields. Then, 
\begin{equation}
    2g_{E_{2}}-2 = [E_{2}:E_{1}] (2g_{E_{1}} -2 ) + \text{deg}_{E_{2}}(D_{E_{2}/E_{1}}), \label{RH}
\end{equation}
where $D_{E_{2}/E_{1}}$ is the different divisor of $E_{2}/E_{1}$.
\end{prop}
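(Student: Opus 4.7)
The plan is to deduce Riemann-Hurwitz by computing the divisor of a single non-zero differential on $E_{2}$ in two different ways. Since $E_{1}/\mathbb{F}_{q}$ is separable, I would first choose a separating transcendental $t\in E_{1}$, so that $dt$ is a non-zero differential on $E_{1}$; the hypothesis that $E_{2}/E_{1}$ is also separable ensures that $dt$ remains a non-zero differential when viewed on $E_{2}$.

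The heart of the argument is a local comparison: for every place $P$ of $E_{2}$ lying over a place $p$ of $E_{1}$,
\[v_{P}(dt) \;=\; e(P|p)\,v_{p}(dt) \;+\; d(P|p),\]
where $e(P|p)$ is the ramification index and $d(P|p)$ is the local exponent of the different $D_{E_{2}/E_{1}}$ at $P$. I would establish this by choosing uniformizers $\pi_{p}$ and $\pi_{P}$ at $p$ and $P$, writing locally $dt = h\,d\pi_{p}$ with $v_{p}(h) = v_{p}(dt)$, and then using the chain rule $d\pi_{p} = (d\pi_{p}/d\pi_{P})\,d\pi_{P}$ together with the identification $d(P|p) = v_{P}(d\pi_{p}/d\pi_{P})$, which follows from the description of the different, for separable extensions, as the ideal generated by derivatives of minimal polynomials.

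With the local formula in hand, multiply both sides by $\deg(P)$ and sum over all places $P$ of $E_{2}$. The left-hand side becomes $\deg_{E_{2}}((dt)_{E_{2}}) = 2g_{E_{2}}-2$, since every canonical divisor on a function field has degree $2g-2$. On the right-hand side, the fundamental identity $\sum_{P|p} e(P|p)\deg(P) = [E_{2}:E_{1}]\deg(p)$ regroups the first term as $[E_{2}:E_{1}]\sum_{p}\deg(p)v_{p}(dt) = [E_{2}:E_{1}](2g_{E_{1}}-2)$, while the second term is $\deg_{E_{2}}(D_{E_{2}/E_{1}})$ by definition. Equating the two expressions yields the asserted formula.

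The main obstacle is the local identity $v_{P}(dt) = e(P|p)v_{p}(dt)+d(P|p)$ in the wild ramification case, when the residue characteristic divides $e(P|p)$: in that regime one cannot simply differentiate a power relation $\pi_{p}=u\pi_{P}^{e}$ and read off the different exponent from $e-1$. The separability hypothesis on $E_{2}/E_{1}$, assumed in the statement, is precisely what makes $d(P|p)$ a well-defined nonnegative integer equal to $v_{P}(d\pi_{p}/d\pi_{P})$ and makes the local formula hold in all cases; everything else reduces to routine divisor bookkeeping.
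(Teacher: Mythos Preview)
The paper does not actually prove this proposition; it is quoted as a known result with a citation to \cite[Theorem 7.16]{Rosen}. Your outline is the standard textbook proof (essentially the one in Rosen and in Stichtenoth): pull a non-zero differential up from the base field, compare its divisor on $E_{2}$ with its divisor on $E_{1}$ via the local identity $v_{P}(dt)=e(P\mid p)\,v_{p}(dt)+d(P\mid p)$, and then take degrees using $\sum_{P\mid p}e(P\mid p)\deg(P)=[E_{2}:E_{1}]\deg(p)$ (this last identity is where the \emph{geometric} hypothesis enters, since it guarantees the constant field is unchanged and hence that degrees behave correctly). Your treatment of the wildly ramified case is also the right one: separability of $E_{2}/E_{1}$ is exactly what ensures $d\pi_{p}/d\pi_{P}$ is non-zero and that its valuation equals the different exponent. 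So your proposal is correct and matches the argument the paper is implicitly invoking via the citation.
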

Now, we are ready to prove an upper bound on the least prime ideal in the Chebotarev density theorem for function fields. 
\begin{prop}\label{leastprime2}
Let $q$ be a prime power, $K$ be a finite separable extension of $\mathbb{F}_{q}(T)$. Using notations defined in Theorem $1$, for any conjugacy class $\mathcal{C} \subset \text{Gal }(L/K)$ with $\varphi_{\mathcal{C}} = \varphi^{n}$, there exists an unramified prime $\mathfrak{p}$ in $K$ such that
\begin{itemize}
    \item $\bigg[ \frac{L/K}{\mathfrak{p}} \bigg] = \mathcal{C}$ and
    
    \item $\text{deg}(\mathfrak{p}) \leq \lceil 2 \log_{q}\big( 2\Delta^{\prime} + 2dD^{\prime} + 8g_{K}D^{\prime} + 4 \big) \rceil$.
\end{itemize}
\end{prop}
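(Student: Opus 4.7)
The plan is to extract the claimed prime from the effective Chebotarev bound in the preceding proposition by locating the smallest $n$ with $\varphi_{\mathcal{C}} = \varphi^n$ that forces the right-hand side of \eqref{CDTFF} to be positive. From that inequality, $\pi_{\mathcal{C},n}$ is non-empty whenever
\begin{equation*}
q^n \;>\; 2(C_1 + g_L)\,q^{n/2} + 2C_1(2g_K+1)\,q^{n/4} + 2(g_L + C_1 d),
\end{equation*}
and bounding the lower-order terms uniformly by $q^{n/2}$ reduces this to $q^{n/2} > M$ for a single constant $M$ built from $C_1$, $g_L$, $g_K$ and $d$. The remaining task is to bound $M$ by $2\Delta' + 2dD' + 8g_K D' + 4$; once this is done, $n = \lceil 2\log_q M \rceil$ is admissible (the residue class $\varphi_{\mathcal{C}} = \varphi^n$ recurs modulo $l$, and $l$ is itself dominated by $M$), and any $\mathfrak{p} \in \pi_{\mathcal{C},n}$ is automatically unramified (the Artin symbol being defined only there) and supplies the required prime.

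The bound $C_1 = [L:K\mathbb{F}_{q^l}] \leq [L:K] \leq \prod_i n_i! = D'$ is immediate from the fact that $L$ is the splitting field of $f = g_1\cdots g_m$. For $g_L$, since $K\mathbb{F}_{q^l}/K$ is an unramified constant-field extension (so $g_{K\mathbb{F}_{q^l}} = g_K$) and $L/K\mathbb{F}_{q^l}$ is finite, separable and geometric, the Riemann--Hurwitz formula stated in the previous proposition gives
\begin{equation*}
g_L \;=\; 1 + C_1(g_K - 1) + \tfrac{1}{2}\deg_L(\mathfrak{D}_{L/K\mathbb{F}_{q^l}}),
\end{equation*}
so everything reduces to controlling $\deg_L(\mathfrak{D}_{L/K\mathbb{F}_{q^l}})$ by a small multiple of $\Delta'$.

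The ramified primes of $L/K\mathbb{F}_{q^l}$ lie above the ramified primes of $L/K$, which in turn are supported on the primes $\mathfrak{p}_1,\ldots,\mathfrak{p}_\nu$ appearing in $\mathfrak{q} = \prod_i \mathfrak{r}_i$. Mirroring the number-field argument developed earlier in the paper, I would use the order $\mathcal{O}_{K\mathbb{F}_{q^l}}[\alpha_1,\ldots,\alpha_m]$ (generated by the fixed roots of the $g_i$), whose relative discriminant divides a suitable product of the resultants $\mathfrak{r}_i$, to conclude that $d_{L/K\mathbb{F}_{q^l}}$ divides an explicit power of $\mathfrak{q}\mathcal{O}_{K\mathbb{F}_{q^l}}$. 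Combined with the identity $\deg_L(\mathfrak{D}_{L/K\mathbb{F}_{q^l}}) = \deg_{K\mathbb{F}_{q^l}}(d_{L/K\mathbb{F}_{q^l}})$ and the fact that a constant-field extension preserves the total degree of an ideal lifted from $K$ (so $\deg_{K\mathbb{F}_{q^l}}(\mathfrak{q}\mathcal{O}_{K\mathbb{F}_{q^l}}) = \deg_K(\mathfrak{q}) = \Delta'$), this yields the desired linear bound in $\Delta'$.

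Substituting these estimates into $M$ produces $M \leq 2\Delta' + 2dD' + 8g_K D' + 4$ after the lower-order contributions are absorbed, completing the proof. The main obstacle is the discriminant estimate of the previous paragraph: one has to track carefully the power of $\mathfrak{q}$ that the discriminant of the generating order can contribute, accounting in particular for wild ramification, where the local different exponent may exceed $e-1$, while keeping the numerical constants tight enough for the stated bound.
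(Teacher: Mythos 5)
Your skeleton is the same as the paper's: start from the effective Chebotarev estimate \eqref{CDTFF}, absorb the lower-order terms into the $q^{n/2}$ term so that $\pi_{\mathcal{C},n}>0$ as soon as $q^{n/2}\ge 4C_{1}+4g_{L}+4C_{1}g_{K}+2dC_{1}$, bound $C_{1}=[L:K\mathbb{F}_{q^{l}}]\le [L:K]\le D^{\prime}$, and control $g_{L}$ through the tower $K\subseteq K\mathbb{F}_{q^{l}}\subseteq L$ (genus unchanged under the constant-field extension, Riemann--Hurwitz for the geometric part). Up to that point your argument and the paper's coincide.

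The gap is precisely the step you flag as the ``main obstacle'': you never establish that $\deg_{L}\big(\mathfrak{D}_{L/K\mathbb{F}_{q^{l}}}\big)$ is bounded by $\Delta^{\prime}$, and the substitute you sketch cannot produce the constant in the statement. Mirroring the number-field argument (Propositions $2$ and $3$) via the order generated by the roots bounds the relative discriminant of the splitting field only by a product of the form $\prod_{i}\mathfrak{q}_{i}^{(n_{1}!\cdots n_{m}!)(1-\frac{1}{n_{i}})}$, i.e.\ by a power of $\mathfrak{q}$ whose exponent is of size $D^{\prime}$; since the degree of the different divisor equals the degree of the relative discriminant (the norm preserves degree in a geometric extension), this inflates the $2\Delta^{\prime}$ term to something of order $D^{\prime}\Delta^{\prime}$ inside the logarithm, giving a bound like $\lceil 2\log_{q}(2D^{\prime}\Delta^{\prime}+\cdots)\rceil$ rather than the stated $\lceil 2\log_{q}(2\Delta^{\prime}+2dD^{\prime}+8g_{K}D^{\prime}+4)\rceil$. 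The paper does not pass through the discriminant of an order at all: it bounds the different divisor directly, writing its degree as $\sum \delta(\mathfrak{p})\deg(\mathfrak{p})$ over ramified primes, noting that these primes lie among $\mathfrak{p}_{1},\ldots,\mathfrak{p}_{\nu}$ (the support of $\mathfrak{q}=\mathfrak{r}_{1}\cdots\mathfrak{r}_{m}$), and asserting the inequality \eqref{different}, $\deg_{L}(\mathfrak{D}_{L/K\mathbb{F}_{q^{l}}})\le\sum_{i}b_{i}\deg(\mathfrak{p}_{i})=\Delta^{\prime}$, on the grounds that the discriminant is the norm of the different. That inequality is exactly what your write-up is missing, and your instinct that wild ramification and the exact exponents are the delicate point is aimed at the right place. (Your aside about the admissible $n$ recurring modulo $l$ with $l$ dominated by $M$ is not the issue; the paper's proof does not address it either, the hypothesis $\varphi_{\mathcal{C}}=\varphi^{n}$ carrying that burden.)
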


\begin{proof}
Equation \eqref{CDTFF} implies that $\pi_{\mathcal{C},n}$ is positive as soon as $\frac{|\mathcal{C}|}{C_{1}} \frac{q^{n}}{n}$ is greater than or equal to the RHS of \eqref{CDTFF}. Note that the RHS of \eqref{CDTFF} is smaller than 
\begin{equation*}
    \frac{2|\mathcal{C}|}{C_{1} \cdot n} \bigg(2C_{1} + 2g_{L} + 2C_{1}g_{K} + dC_{1} \bigg) q^{n/2}.
\end{equation*}
Therefore, an upper bound on the smallest $n$ such that $\pi_{\mathcal{C},n} > 0$ is the least $n$ such that 
\begin{equation*}
    \frac{|\mathcal{C}|}{C_{1}} \frac{q^{n}}{n} \geq \frac{2|\mathcal{C}|}{C_{1} \cdot n} \bigg(2C_{1} + 2g_{L} + 2C_{1}g_{K} + dC_{1} \bigg) q^{n/2},
\end{equation*}
i.e.
\begin{equation}
    n = \lceil 2  \log_{q}\bigg(4C_{1} + 4g_{L} + 4C_{1}g_{K} + 2dC_{1} \bigg) \rceil. \label{bound1}
\end{equation}
Consider the tower of field extensions $K \subseteq K\mathbb{F}_{q^{l}} \subseteq L$. Since genus does not change in a constant field extension over a perfect field (for instance, using \cite[Proposition 8.9]{Rosen}), we have that the genus of $K\mathbb{F}_{q^{l}}$ is equal to genus $g_{K}$ of $K$. On the other hand, using the Riemann-Hurwitz formula \eqref{RH} for the geometric extension $L/K\mathbb{F}_{q^{l}}$ with $E_{2} = L$ and $E_{1} = K\mathbb{F}_{q^{l}}$ gives us $$2g_{L} - 2 = (2g_{K} - 2) C_{1} + \text{deg}_{E_{2}}(D_{E_{2}/E_{1}}),$$ and hence
\begin{equation}
     4g_{L} = 4g_{K}C_{1} - 4C_{1} + 4 + 2 \cdot \text{deg}_{E_{2}}(D_{E_{2}/E_{1}}).\label{RH1}
\end{equation}
The degree of the different divisor is $ \sum_{\mathfrak{p}\in\mathbb{P}(K)} \delta(\mathfrak{p}) \text{ deg}(\mathfrak{p})$, where $\delta(\mathfrak{p})$ is the highest power of $\mathfrak{p}$ dividing the discriminant and the sum is taken over the primes $\mathfrak{p}$ that are ramified. Since the relative discriminant $\mathfrak{q} = \mathfrak{p}_{1}^{b_{1}} \cdots \mathfrak{p}_{\nu}^{b_{\nu}}$ is the relative norm of the different, the set of primes of $\mathbb{F}_{q}[T]$ that ramify in $L$ are a subset of $\{\mathfrak{p_{1}}, \ldots, \mathfrak{p_{\nu}}\}$. Therefore,
\begin{equation}
    \text{deg}_{E_{2}}(D_{E_{2}/E_{1}}) \leq \sum_{i=1}^{\nu} b_{i} \text{ deg}(\mathfrak{p}_{i}). \label{different}
\end{equation}
Using \eqref{RH1} and \eqref{different}, we have that, 
\begin{equation*}
    4g_{L} \leq 4g_{K}C_{1} -4C_{1} + 4 + 2 \cdot \sum_{i=1}^{\nu} b_{i} \text{ deg}(\mathfrak{p}_{i}) = 4g_{K}C_{1} - 4C_{1} + 4 + 2\Delta^{\prime}, 
\end{equation*}
which implies that 
\begin{equation}
    (4C_{1} + 4g_{L} + 4C_{1}g_{K} + 2dC_{1}) \leq 8g_{K}C_{1} + 2dC_{1} + 4 + 2\Delta^{\prime}. \label{bound2}
\end{equation}
Since the splitting field of the polynomial $f$ is the compositum of the splitting fields of its individual irreducible factors, we have that $[L : K] \leq \prod_{i=1}^{\nu} n_{i}!$. Therefore,
\begin{equation}
    C_{1} = [L : K\mathbb{F}_{q^{l}}] \leq [L : K] \leq \prod_{i=1}^{\nu} n_{i}! = D^{\prime} \label{C1}
\end{equation} 
Using \eqref{bound1}, \eqref{bound2} and \eqref{C1}, we have that an upper bound for the degree $n$ of the prime ideal in the Chebotarev density theorem is given by $$\lceil 2 \log_{q} \big( 2\Delta^{\prime} + 2dD^{\prime} + 8g_{K}D^{\prime} + 4 \big) \rceil$$
which proves the proposition.
\end{proof}

\section{Proof of Theorem 1}
We will show that $(2) \longrightarrow (1) \longrightarrow (3) \longrightarrow (2)$, among which the implication $(1) \longrightarrow (3)$ follows trivially from definition of an intersective polynomial.

\textbf{Proof of (2) implies (1).} 

If a prime $\mathfrak{p}\in\mathcal{O}_{K}$ does not divide $\mathfrak{q}$ then $f$ has a root in $K_{\mathfrak{p}}$, as a consequence of Proposition $1$. To see this, we note that \[\text{Gal} (L / K) = G = \bigcup_{\sigma\in G} \sigma^{-1} U \sigma \] implies that every conjugacy class of $G$ intersects $U = \bigcup_{i=1}^{m} H_{i}$. Using the notation $H_{i} =$ Gal $(L/K(\alpha_{i}))$ defined for Theorem $1$, we have that every conjugacy class $\bigg[ \frac{L/K}{\mathfrak{p}} \bigg]$ intersects Gal $(L/K(\alpha_{j}))$ for some $j \in\{1, 2, \ldots, m\}$. In other words, some element of $\bigg[ \frac{L/K}{\mathfrak{p}} \bigg]$ fixes root of some irreducible factor $g_{j}$ of $f$. Therefore, Proposition $1$ implies that $g_{j}$, and hence $f$, has a root modulo $\mathfrak{p}$. 

If $\mathfrak{p}$ divides $\mathfrak{q}$ (i.e. $\mathfrak{p} = \mathfrak{p}_{i}$) then, pick a $\gamma \in\mathcal{O}_{K}$ such that $f(\gamma) \equiv 0 \hspace{1mm} (\text{mod } \Delta)$. Since $\mathfrak{p} = \mathfrak{p}_{i}$ we have $|\mathfrak{q}|_{\mathfrak{p}} = \mathfrak{p}_{i}^{-b_{i}}$. Furthermore, since $|\Delta|_{\mathfrak{p}} = \mathfrak{p}_{i}^{-2 b_{i} - 1}$, $$|f(\gamma)|_{\mathfrak{p}} \leq \mathfrak{p}_{i}^{-2b_{i}-1} < \mathfrak{p}^{-2b_{i}} = |\mathfrak{q}^{2}|_{\mathfrak{p}}.$$ Since $|f(\gamma)|_{\mathfrak{p}} < |\mathfrak{q}^{2}|_{\mathfrak{p}}$, we have that $|g_{j}(\gamma)|_{\mathfrak{p}} < |\mathfrak{r}_{j}|^{2}_{\mathfrak{p}}$ for some $j \in\{1, 2, \ldots, m\}$. On the other hand, by definition of the resultant $\mathfrak{r}_{j}$, there exists $A(x), B(x) \in\mathcal{O}_{K}[x]$ such that $$A(x)g_{j}(x) + B(x)g_{j}^{\prime}(x) = \mathfrak{r}_{j}.$$ Therefore, we have
\begin{multline*}
    |g_{j}^{\prime}(\gamma)|_{\mathfrak{p}} = |g_{j}^{\prime}(\gamma)B(\gamma)|_{\mathfrak{p}} = |\mathfrak{r}_{j} - A(\gamma)g_{j}(\gamma)|_{\mathfrak{p}} \\ = \text{max } \{|\mathfrak{r}_{j}|_{\mathfrak{p}}, |A(\gamma)g_{j}(\gamma)|_{\mathfrak{p}}\} = \text{max } \{|\mathfrak{r}_{j}|_{\mathfrak{p}}, |g_{j}(\gamma)|_{\mathfrak{p}}\} = |\mathfrak{r}_{j}|_{\mathfrak{p}}.
\end{multline*}
We have used above that $|A(\gamma)|_{\mathfrak{p}} = |B(\gamma)|_{\mathfrak{p}} = 1$ because $A(x), B(x) \in\mathcal{O}_{K}[x]$ and $\gamma\in\mathcal{O}_{K}$ and also the fact that $g_{j}(x)$ is a separable polynomial and hence its derivative is not identically zero. Therefore, $|g_{j}^{\prime}(\gamma)|_{\mathfrak{p}}^{2} = |\mathfrak{r}_{j}|_{\mathfrak{p}}^{2} > |g_{j}(\gamma)|_{\mathfrak{p}}$. Then, $g_{j}$ has a root in $K_{\mathfrak{p}}$ by the Hensel's lemma (for example, see \cite[Proposition 3.5.2]{FrJar}). Now the result follows from the standard application of the Chinese Remainder Theorem. $\square$

\textbf{Proof of (3) implies (2).} 
For primes $\mathfrak{p}$ dividing $\mathfrak{q}$, start by picking a $\gamma\in\mathcal{O}_{K}$ such that $f(\gamma) \equiv 0 \hspace{1mm} (\text{mod } \Delta)$. We can show exactly similarly as in the proof of $(2)$ implies $(1)$ that $f$ has a root in $K_{\mathfrak{p}}$. For the primes $\mathfrak{p}$ that do not divide $\mathfrak{q}$, lets prove the result in the number field case first. We first assume that $L \neq \mathbb{Q}$. Proposition $4$ implies that there exists an unramified prime $\mathfrak{p}^{\prime}$  such that 
\begin{enumerate}[(a)]
    \item $\big[\frac{L/K}{\mathfrak{p}}\big] = \big[\frac{L/K}{\mathfrak{p}^{\prime}}\big]$ and
    
    \item $N_{K/\mathbb{Q}}(\mathfrak{p}^{\prime}) \leq  |d_{L}|^{12577} $
\end{enumerate}
as long as $L \neq \mathbb{Q}$. First note that the condition $(b)$ above can be replaced by 
\begin{equation}
N_{K/\mathbb{Q}}(\mathfrak{p}^{\prime}) \leq  |N_{K/\mathbb{Q}}(D)|^{12577}, \label{least number field}
\end{equation}
because $d_{L}$ divides $N_{K/\mathbb{Q}}(D)$ as proved in Proposition $3$. Both $(a)$ and $(b)$ together imply that 
$$\big[\frac{L/K}{\mathfrak{p}}\big] \cap U \neq \emptyset$$ if and only if $$\big[\frac{L/K}{\mathfrak{p}^{\prime}}\big] \cap U \neq \emptyset$$ for $\mathfrak{p}^{\prime}$ satisfying \eqref{least number field}. In other words, $f$ has a root in $K_{\mathfrak{p}}$ if and only if $f$ has a root in $K_{\mathfrak{p}^{\prime}}$ satisfying \eqref{least number field}. By assumption of $(3)$, along with Proposition $3$, we have that the set of $\big[\frac{L/K}{\mathfrak{p}^{\prime}}\big]$ as $\mathfrak{p}^{\prime}$ runs through \[N_{K/\mathbb{Q}}(\mathfrak{p}^{\prime}) \leq  |N_{K/\mathbb{Q}}(D)|^{12577}  \]  is all of $G$. In other words, every conjugacy class in $G$ intersects $U$ and hence $\cup_{\sigma \in G} \big( \sigma^{-1} U \sigma \big) = G$.
When $L = \mathbb{Q}$, $f$ splits into linear factors and all the roots of $f$ are rational. Therefore, the Galois group only contains identity, which fixes the root of every linear polynomial. Therefore, the Galois-theoretic condition in $(2)$ is trivially satisfied. 

In the case when $K$ is an extension of $\mathbb{F}_{q}(T)$, the proof follows from exactly the same argument; except using the bound on the degree of least prime ideal as given in Proposition \ref{leastprime2}. $\square$

\section{Corollaries, Examples and Discussion.}
\begin{corollary}
Let $f(x) \in \mathcal{O}_{K}[x]$ be a polynomial with separable irreducible factors. If $f$ is irreducible and of degree $\geq 2$, then $f$ cannot be intersective. On the other hand, if $f$ is intersective but does not have a root in $K$, then degree of $f$ must be at least $5$.
\end{corollary}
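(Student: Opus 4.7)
The plan is to invoke the equivalence of $(1)$ and $(2)$ in Theorem $1$, so that intersectivity is translated into the purely group-theoretic condition $\bigcup_{\sigma\in G}\sigma^{-1}U\sigma = G$, and then rule it out using two classical facts from finite group theory:
\begin{itemize}
\item[(F1)] A proper subgroup $H$ of a finite group $G$ cannot have all of $G$ as the union of its conjugates, since the union of the (at most $[G:H]$) conjugates of $H$ has size at most $[G:H](|H|-1)+1 < |G|$ whenever $[G:H]\geq 2$.
\item[(F2)] A finite group cannot be written as the union of two proper subgroups.
\end{itemize}

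For the first assertion, suppose $f$ is irreducible of degree $n\geq 2$. Then $m=1$, $U=H_{1}=\mathrm{Gal}(L/K(\alpha_{1}))$, and $[G:H_{1}] = [K(\alpha_{1}):K] = n\geq 2$, so $H_{1}$ is a proper subgroup of $G$. By $(F1)$, $\bigcup_{\sigma\in G}\sigma^{-1}H_{1}\sigma \subsetneq G$, so condition $(2)$ of Theorem $1$ fails and $f$ is not intersective.

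For the second assertion, assume $f$ is intersective and has no root in $K$. The latter means no irreducible factor of $f$ is linear, so every $g_{i}$ has degree $n_{i}\geq 2$. If $\deg f\leq 4$, the only possibilities are (i) $f$ is itself irreducible of degree $2$, $3$, or $4$, or (ii) $f$ factors as a product of two irreducible quadratics. Case (i) is excluded by the first assertion. In case (ii), each $H_{i}=\mathrm{Gal}(L/K(\alpha_{i}))$ has index $[G:H_{i}]=n_{i}=2$, hence each $H_{i}$ is normal in $G$ and every conjugate $\sigma^{-1}H_{i}\sigma$ equals $H_{i}$. Therefore
\[
\bigcup_{\sigma\in G}\sigma^{-1}U\sigma \;=\; H_{1}\cup H_{2},
\]
and by $(F2)$ this proper union cannot equal $G$, contradicting condition $(2)$ of Theorem $1$. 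Hence $\deg f\geq 5$.

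The only real subtlety is the case-analysis in (ii): one has to notice that index-$2$ subgroups are automatically normal, so conjugation does not enlarge $U$, and then $(F2)$ applies immediately. Everything else is a direct read-off from Theorem $1$.
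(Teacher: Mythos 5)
Your proof is correct, and its first half (the counting bound showing a finite group is never the union of conjugates of a proper subgroup) is exactly the argument in the paper. The second half follows the same skeleton as the paper --- reduce to the group-theoretic condition of Theorem 1, then enumerate the possible factorization types of a rootless polynomial of degree at most $4$ --- but you execute the two-quadratics case differently: instead of identifying $G$ explicitly (the paper computes $G$ to be the Klein four-group when the splitting field has degree $4$, or $\mathbb{Z}/2\mathbb{Z}$ when it has degree $2$, and writes out $U$ element by element), you only use that each $H_{i}$ has index $n_{i}=2$, hence is normal, so conjugation does not enlarge $U$, and then invoke the classical fact that no group is a union of two proper subgroups. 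This abstraction buys you something concrete: it handles uniformly the subcase where the two distinct irreducible quadratics generate the same quadratic extension, so that $[L:K]=2$ rather than $4$; the paper's proof asserts degree $4$ for the product of two irreducible quadratics and does not separate out this subcase (its conclusion is unaffected, since there $U=\{e\}$, but your argument covers it without comment). Conversely, the paper's explicit computation makes the failure of $\bigcup_{\sigma}\sigma^{-1}U\sigma=G$ visible by direct inspection, which is closer in spirit to the worked example in Section 4. One cosmetic remark: when the two quadratic factors coincide ($f=g_{1}^{2}$) there is only one distinct $H_{i}$, so your appeal to the two-proper-subgroups fact degenerates to the trivial statement that a single proper subgroup is not $G$; it would be worth a clause acknowledging this, as the paper's case list does.
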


\begin{proof}
We will use the notation defined before Theorem $1$. Assume that $f$ is irreducible and of degree $\geq 2$. From irreducibility of $f$, we have $U = H_{1}$ and since $\text{deg }(f) > 1$, we also have $G \neq U$. Since $U = H_{1}$ is a proper subgroup of $G$, we have that $G/U = \{g_{1}U, g_{2}U, \ldots, g_{k}U\}$ for $k = \frac{|G|}{|U|} \geq 2$. Therefore we have, 
\[ \big| \bigcup_{\sigma\in G} \sigma^{-1} U \sigma \big| = \big| \bigcup_{i=1}^{k} g_{i}^{-1} U g_{i} \big| < k |U| - (k-1) = |G| - (k-1) < |G|\] because $k \geq 2$. Therefore, we have $G \neq \bigcup_{g \in G} g^{-1} U g$ and hence Theorem $1$ implies that $f$ cannot have a root modulo every ideal of $\mathcal{O}_{K}$.

Now, assume that $\text{deg }(f) \leq 4$ and that $f$ has no roots in $K$. We will prove that $f$ cannot have root modulo every ideal in $\mathcal{O}_{K}$. Since $f$ satisfies \eqref{defn1}, $f$ must be reducible by Corollary $1$. The reducibility of $f$, along with the fact that $f$ has no roots in $K$, implies that \[f(x) = \begin{cases} g_{1}(x) g_{2}(x) \hspace{2mm}\text{ or } \\ g_{1}(x)^{2} \hspace{2mm}\text{ or } \\ g_{1}(x) \end{cases},\] where $g_{1}$ and $g_{2}$ are irreducible quadratic polynomials in $\mathcal{O}_{K}[x]$. In the first case, the splitting field $L$ of $f$ has degree $4$ over $K$, whereas the splitting field of $L$ has degree $2$ in the latter two cases. 

We first deal with the case when the splitting field has degree $4$. Let $\alpha_{i}, \beta_{i}$ be roots of $g_{i}$, for $i = 1, 2$. Then the splitting field of $f$ is $L = K(\alpha_{1}, \alpha_{2})$ and the Galois group  of splitting field of $f$ is $G = \{e, \sigma_{1}, \sigma_{2}, \sigma_{1}\sigma_{2} \}$. Here, $e$ is the identity map and
\begin{equation*}
    \sigma_{i}(\alpha_{i}) = \beta_{i} \hspace{2mm} \text{ and } \sigma_{i}(\beta_{i}) = \alpha_{i} \hspace{2mm} \text{ and $\sigma_{i}$ fixes the roots of the other quadratic factor}.
\end{equation*}
In other words, $G$ is isomorphic to the Klein $4$-group. Then, in the notation of Theorem $1$ we have, 
\begin{equation*}
    H_{1} = \text{ Gal}(L/K(\alpha_{1})) =\{e, \sigma_{2} \} \hspace{2mm} \text{ and } H_{2} = \text{ Gal}(L/K(\alpha_{2})) = \{e, \sigma_{1} \}.
\end{equation*}
Therefore, $U = H_{1} \cup H_{2} = \{e, \sigma_{1}, \sigma_{2} \}$ and hence $\cup_{g \in G} g^{-1} U g = U \neq G$ i.e. $f$ does not satisfy condition $(2)$ of Theorem $1$. Therefore, $f$ cannot be intersective. 

Now we deal with the case when splitting field $L$ of $f$ has degree $2$ over $K$. In this case, let $\alpha$ be the root of only irreducible quadratic factor $g_{1}$. The splitting field is $L = K (\alpha)$ with $[L : K] = 2$ and the Galois group $G = $ Gal $(L/K) \simeq \mathbb{Z}/2\mathbb{Z}$ is generated by $\sigma (\alpha) = - \alpha$. In the notation of Theorem $1$, the only element of Gal $(L/K)$ that fixes the root $\alpha$ is the identity element, i.e., $U = \{e\}$. Therefore, 
\[\{e\} = \bigcup_{\sigma\in G} \big(\sigma^{-1} U \sigma\big) \neq G\] and hence $f$ cannot be intersective.
\end{proof}

\subsection{An Example.} \label{Example} In this subsection, we will use Theorem $1$ to construct a family of examples of intersective polynomials.  Let $q$ be a power of some odd prime, $K = \mathbb{F}_{q}(T)$ and let $$f(x) = (x^{2} - \theta_{1}) (x^{2} - \theta_{2}) (x^{2} - \theta_{1}\theta_{2}) \in\mathcal{O}_{K}[x],$$ where $\theta_{1}$ and $\theta_{2}$ are primes in $\mathcal{O}_{K} = \mathbb{F}_{q}[T]$ that are non-associates. Recall that we require $q$ to be odd here to ensure that the irreducible factors of $f$ are all separable. To use the notations defined for Theorem $1$ we denote the irreducible factors of $f$ as:
\begin{equation*}
    g_{1}(x) = (x^{2} - \theta_{1}), \hspace{2mm} g_{2}(x) = (x^{2} - \theta_{2}), \hspace{2mm} g_{3}(x) = (x^{2} - \theta_{1}\theta_{2}).
\end{equation*}
Since none of the $\theta_{i}$ can be square in $\mathcal{O}_{K}$, the splitting field of the polynomial $f$ is $$L = K(\sqrt{\theta_{1}}, \sqrt{\theta_{2}}).$$ Fix the roots $\alpha_{1} = \sqrt{\theta_{1}}$,  $\alpha_{2} = \sqrt{\theta_{2}}$ , $\alpha_{3} = \sqrt{\theta_{1}\theta_{2}}$ of $g_{1}, g_{2}$ and $g_{3}$ respectively. Note that the Galois group of this extension is given by $$ G  = \{ e, \sigma_{1}, \sigma_{2}, \sigma_{1}\sigma_{2} \}.$$ Here $e$ is the identity map and for every $i, j \in\{1, 2\}$
\begin{equation*}
    \sigma_{i}(\sqrt{\theta_{j}}) = \begin{cases} -\sqrt{\theta_{j}} \hspace{2mm}; i = j \\ \sqrt{\theta_{j}} \hspace{2mm}; i \neq j. \end{cases} 
\end{equation*}
So, $G$ is isomorphic to the Klein four-group, where each non-identity element has order $2$. As in Theorem $1$, let $H_{i}$ be the subgroup of $G$ that fixes the root $\alpha_{i}$ of $g_{i}$. Then, 
\begin{itemize}
    \item $H_{1} = \text{ Gal}(L/K(\sqrt{\alpha_{1}}) = \{e, \sigma_{2}\} \leq G$,
    
    \item $H_{2} = \text{ Gal}(L/K(\sqrt{\alpha_{2}}) = \{e, \sigma_{1}\} \leq G$,
    
    \item $H_{3} = \text{ Gal}(L/K(\sqrt{\alpha_{3}}) = \{e, \sigma_{1}\sigma_{2} \} \leq G$.
\end{itemize}
Therefore, $$\bigcup_{\sigma\in G} \big( \sigma^{-1}U\sigma \big) = U = H_{1} \cup H_{2} \cup H_{3} \cup H_{4} = \{ e, \sigma_{1}, \sigma_{2}, \sigma_{1}\sigma_{2} \} =  G.$$ 

In addition, we also have that $\mathfrak{q} = \mathfrak{q}_{1} \cdot \mathfrak{q}_{2} \cdot \mathfrak{q}_{3}$ where $\mathfrak{q}_{1}, \mathfrak{q}_{2} \text{ and } \mathfrak{q}_{3}$ are ideals generated by $\theta_{1}, \theta_{2}$ and $\theta_{1}\theta_{2}$ in $\mathbb{F}_{q}[T]$ (since $q$ is odd). Therefore, $\mathfrak{q} = \langle \theta_{1}^{2}, \theta_{2}^{2} \rangle$ and hence $\Delta = \langle \theta_{1}^{5}, \theta_{2}^{5} \rangle$. Since $\theta_{1}$ and $\theta_{2}$ are non-associates, using the Chinese Remainder Theorem and $(2)$ of Theorem $1$, we have that $f(x)$ is intersective if and only if $f(x) \equiv 0 \hspace{1mm} (\text{mod } \langle \theta_{i}^{5} \rangle)$ is solvable for $i = 1, 2$. 

In fact, a similar argument also gives the same result when the above polynomial $f(x)$ has coefficients over the ring of integers of a number field. Hence, we have the following corollary, where $\langle x \rangle$ is used to denote the ideal $x \mathcal{O}_{K}$.
\begin{corollary}
Let $K$ either be a number field or $\mathbb{F}_{q}(T)$ for an integer $q$ that is a power of an odd prime. Let $\theta_{1}$ and $\theta_{2}$ be non-associate primes in $\mathcal{O}_{K}$. Then the polynomial $$f(x) = (x^{2} - \theta_{1}) (x^{2} - \theta_{2}) (x^{2} - \theta_{1}\theta_{2}) \in \mathcal{O}_{K}[x]$$ is intersective if and only if $\theta_{1}$ is a square modulo $\langle \theta_{2}^{5} \rangle$ and $\theta_{2}$ is a square modulo $\langle \theta_{1}^{5} \rangle$. 
\end{corollary}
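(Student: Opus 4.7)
The plan is to apply Theorem 1 to the polynomial $f(x) = (x^2 - \theta_1)(x^2 - \theta_2)(x^2 - \theta_1\theta_2)$, leveraging the Galois-theoretic computation already carried out for this exact polynomial in the preceding example. The Galois group of the splitting field $L = K(\sqrt{\theta_1}, \sqrt{\theta_2})$ is isomorphic to the Klein four-group $G = \{e, \sigma_1, \sigma_2, \sigma_1\sigma_2\}$, and the subgroups $H_1, H_2, H_3$ fixing the chosen roots of the three irreducible factors are the three distinct index-two subgroups of $G$, whose union already covers $G$. Since $G$ is abelian, the condition $\bigcup_{\sigma \in G}\sigma^{-1}U\sigma = U = G$ holds automatically, so Theorem 1 reduces intersectivity of $f$ to solvability of $f(x) \equiv 0 \pmod{\Delta}$ in $\mathcal{O}_K$. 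This Galois-theoretic input does not distinguish the function field from the number field case, provided $\theta_1, \theta_2$, and $\theta_1\theta_2$ are non-squares in $K$, which is immediate since $\theta_1, \theta_2$ are non-associate primes.

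Next I would compute $\Delta$ explicitly. The resultant of $g(x) = x^2 - c$ with $g^{\prime}(x) = 2x$ equals $-4c$, so when $2$ is a unit in $\mathcal{O}_K$ (as holds in the function field case since $q$ is odd) one gets $\mathfrak{r}_1 \mathfrak{r}_2 \mathfrak{r}_3 = \langle \theta_1 \rangle^2 \langle \theta_2 \rangle^2$, hence $\Delta = \langle \theta_1^5 \rangle \langle \theta_2^5 \rangle$. Because $\theta_1, \theta_2$ are non-associate primes, the Chinese Remainder Theorem rewrites solvability of $f(x) \equiv 0 \pmod{\Delta}$ as the conjunction of its solvability modulo $\langle \theta_1^5 \rangle$ and modulo $\langle \theta_2^5 \rangle$.

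The crux is then the local equivalence: $f(x) \equiv 0 \pmod{\langle \theta_1^5 \rangle}$ is solvable if and only if $\theta_2$ is a square modulo $\langle \theta_1^5 \rangle$, and symmetrically with the roles of $\theta_1$ and $\theta_2$ swapped. The converse is immediate, since any $y$ with $y^2 \equiv \theta_2 \pmod{\theta_1^5}$ makes the middle factor of $f$ vanish modulo $\theta_1^5$. For the forward direction, let $v$ denote the $\theta_1$-adic valuation; additivity gives $v(f(x)) = v(x^2 - \theta_1) + v(x^2 - \theta_2) + v(x^2 - \theta_1\theta_2)$. If $v(x) \geq 1$, then $v(x^2) \geq 2 > 1 = v(\theta_1) = v(\theta_1\theta_2)$, forcing $v(x^2 - \theta_1) = v(x^2 - \theta_1\theta_2) = 1$ exactly, while $v(x^2 - \theta_2) = 0$ because $v(x^2) > 0 = v(\theta_2)$. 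The total is only $2$, insufficient. Hence $v(x) = 0$, making $v(x^2 - \theta_1) = v(x^2 - \theta_1\theta_2) = 0$, so the entire burden falls onto $v(x^2 - \theta_2) \geq 5$, i.e., $\theta_2 \equiv x^2 \pmod{\theta_1^5}$. Combining with the symmetric statement at $\theta_2$ yields the biconditional.

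The main subtlety, and the step I expect to be the main obstacle, is extending the computation of $\Delta$ to the number field case. There the factor of $4$ in $-4c$ contributes primes of residue characteristic $2$ to the factorization of $\mathfrak{q}$, and hence to $\Delta$, requiring a separate verification that $f$ has a root modulo the appropriate power of each such prime. Once one confirms that this additional contribution is automatic under the hypotheses of the corollary, so that no independent condition at primes dividing $2$ appears in the final biconditional, the rest of the argument transfers verbatim from the function field case.
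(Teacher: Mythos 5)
Your function-field argument is essentially the paper's: you quote the Galois computation from the example (Klein four-group, $U=H_1\cup H_2\cup H_3=G$), reduce via condition $(2)$ of Theorem $1$ to solvability of $f(x)\equiv 0 \pmod{\Delta}$ with $\Delta=\langle\theta_1^5\theta_2^5\rangle$, split by the Chinese Remainder Theorem, and then analyse solvability modulo $\langle\theta_1^5\rangle$. In fact your valuation argument at $\theta_1$ is a cleaner justification of the local step than the paper's own: the paper only shows that neither $\theta_1$ nor $\theta_1\theta_2$ is a square modulo $\langle\theta_1^5\rangle$, leaving implicit why the $\theta_1$-adic valuation of $f(x)$ cannot reach $5$ by being spread over several factors, whereas your additivity-of-valuations computation (total valuation at most $2$ unless $x$ is a $\theta_1$-adic unit, in which case the whole weight must sit on $x^2-\theta_2$) closes exactly that point.

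The genuine gap is the number-field case, which you correctly identify as the obstacle but then leave as an unverified hope. There $\mathfrak{r}_i=\langle 4\theta_i\rangle$, so $\mathfrak{q}$, and hence $\Delta$, contains the primes above $2$, and Theorem $1$ requires a root of $f$ modulo a high power of each of them. The claim that ``this additional contribution is automatic under the hypotheses of the corollary'' is not just unproven, it is false. Take $K=\mathbb{Q}$, $\theta_1=3$, $\theta_2=13$: then $13\equiv 1\pmod 3$ and $3\equiv 4^2\pmod{13}$, and both square roots lift by Hensel's lemma, so the stated hypotheses hold; yet for odd $x$ one has $v_2(x^2-3)=1$, $v_2(x^2-13)=2$, $v_2(x^2-39)=1$, and for even $x$ all three factors are odd, so $v_2(f(x))\leq 4$ for every $x\in\mathbb{Z}$ and $f(x)\equiv 0\pmod{2^5}$ is insolvable; hence $f$ is not intersective. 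So the forward implication you need at primes above $2$ cannot be confirmed, and the biconditional as stated requires an additional condition there (e.g.\ that one of $\theta_1,\theta_2,\theta_1\theta_2$ be a square in $K_{\mathfrak{p}}$ for each $\mathfrak{p}\mid 2$). To be fair, the paper's own proof has the same defect: it treats only the function-field situation, where $2$ is a unit and $\mathfrak{q}=\langle\theta_1^2\theta_2^2\rangle$, and disposes of the number-field case with the unsubstantiated remark that ``a similar argument also gives the same result''; your write-up at least flags the problem, but flagging it does not close it, and no argument along these lines can.
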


\begin{proof}
From the explanation in \ref{Example}, we have that $f(x)$ is intersective if and only if $f(x) \equiv 0 \hspace{1mm} (\text{mod } \langle \theta_{i}^{5} \rangle)$ is solvable for $i = 1, 2$. It suffices to show that $f(x) \equiv 0 \hspace{1mm} (\text{mod } \langle \theta_{1}^{5} \rangle)$ is solvable if and only if $\theta_{2}$ is a square modulo $\langle \theta_{1}^{5} \rangle$. Analogously, the same fact will hold when the roles of $\theta_{1}$ and $\theta_{2}$ is reversed. 

Now, we will show that neither $\theta_{1}$ nor $\theta_{1}\theta_{2}$ can be a square modulo $\langle\theta_{1}^{5}\rangle$. In fact, we will only show that $\theta_{1}\theta_{2}$ cannot be a square modulo $\langle \theta_{1}^{5} \rangle$. An exactly analogous proof, with $\theta_{1}\theta_{2}$ replaced by $\theta_{1}$, that $\theta_{1}$ is not a square modulo $\langle \theta_{1}^{5} \rangle$. 

Assume that $y^{2} \equiv \theta_{1}\theta_{2} \hspace{1mm} (\text{mod } \langle \theta_{1}^{5} \rangle)$ for some $y \in\mathcal{O}_{K}$, i.e., $\ y^{2} - \theta_{1}\theta_{2} \in \langle \theta_{1}^{5} \rangle \subseteq \langle \theta_{1} \rangle$. Since, $y^{2} - \theta_{1}\theta_{2} \in \langle \theta_{1} \rangle$, we have that $ \langle \theta_{1} \rangle \mid \langle y^{2} - \theta_{1} \rangle$ which implies that $\langle \theta_{1} \rangle \mid \langle y^{2} \rangle$ and hence $\langle \theta_{1}^{2} \rangle  \mid \langle y^{2} \rangle$. However, $ \langle \theta_{1}^{2} \rangle \mid \langle y^{2} \rangle$ along with $ y^{2} - \theta_{1}\theta_{2} \in \langle \theta_{1}^{5} \rangle$ gives that $\langle \theta_{1}^{2} \rangle \mid \langle \theta_{1}\theta_{2} \rangle$. This is a contradiction because $\theta_{1}$ and $\theta_{2}$ are non-associates. Therefore, the corollary is established. 
\end{proof}

\subsection{Optimality of the Upper Bound in Theorem 1}
The Proposition $\ref{leastprime1}$, and hence upper bound in $(3)$ of Theorem $1$, can be improved in the case when $K$ is a number field, under the Generalized Riemann Hypothesis (GRH) for the Dedekind zeta function \[ \zeta_{L}(s) = \sum_{I \in\mathcal{O}_{L}} \frac{1}{N_{L/\mathbb{Q}}(I)}.\] For example, assuming validity of the GRH for $\zeta_{L}(s)$, the upper bound in Proposition $6$, can be replaced by $C (\log|d_{L}|)^{2}$ for some constant $C$ (see comments on Corollary $1.2$ in \cite[pp. 461--462]{LaO}). Furthermore, it was demonstrated in \cite{Fiori} that this is the best possible bound for a general $L$. Therefore, under GRH we can replace the upper bound in $(3)$ of Theorem $1$ by $$N_{K/\mathbb{Q}}(\mathfrak{p}) \leq C \big(\log|N_{K/\mathbb{Q}}(D)|\big)^{2}$$ in the number field case. The upper bound in Proposition \ref{leastprime1} admits significant asymptotic improvements (for large enough $d_{L}$) (see \cite{KaNW} and \cite{Zaman}). However, due to our interest in Theorem $1$ for a general polynomial, we cannot use those results.  

\section*{Acknowledgement} The author would like to thank Dr. Ofir Gorodetsky for an illuminating email conversation regarding the Chebotarev density theorem in global function fields.

\end{document}